\theoremstyle{definition}{
\newtheorem{Def}{{\rm Definition}}
\newtheorem{Ex}{{\rm Example}}
\newtheorem{Rem}{{\rm Remark}}

}
\theoremstyle{plain}
{
\newtheorem{Cor}{Corollary}
\newtheorem{Prop}{Proposition}
\newtheorem{Thm}{Theorem}
\newtheorem{MainThm}{Main Theorem}

}
\begin{document}
\title[Special generic maps into ${\mathbb{R}}^5$ and studies on the cohomology]{Special generic maps into ${\mathbb{R}}^5$ on closed and simply-connected manifolds and information on the cohomology of the manifolds}
\author{Naoki Kitazawa}
\keywords{Special generic maps. Singular sets of smooth maps. (Co)homology. Closed and simply-connected manifolds. \\
\indent {\it \textup{2020} Mathematics Subject Classification}: Primary~57R45. Secondary~57R19.}
\address{Institute of Mathematics for Industry, Kyushu University, 744 Motooka, Nishi-ku Fukuoka 819-0395, Japan\\
 TEL (Office): +81-92-802-4402 \\
 FAX (Office): +81-92-802-4405 \\
}
\email{n-kitazawa@imi.kyushu-u.ac.jp}
\urladdr{https://naokikitazawa.github.io/NaokiKitazawa.html}

\begin{abstract}
	Morse functions with exactly two singular points on spheres and canonical projections of spheres belong to the class of a certain good class of smooth maps: {\it special generic} maps.
	We mainly investigate information on cohomology of closed and simply-connected manifolds admitting such maps into the $5$-dimensional Euclidean spaces by investigating the embedded curves and submanifolds and their preimages. 

Studies on homology groups for ones into the Euclidean spaces (whose dimensions are lower than $5$ in most cases) have been pioneered by Saeki and Sakuma since 1990s and later by Nishioka and Wrazidlo since 2010s. Recently the author has started pioneering studies on the cohomology for cases where the dimensions of the Euclidean spaces may not be lower than $5$. Our new cases are difficult due to the situation that the dimensions of manifolds we consider are higher. Previously, we have found several restrictions on the cohomology rings. We present new restrictions by new investigations. 

\end{abstract}


\maketitle
\section{Introduction.}
\label{sec:1}

The class of special generic maps is a class of smooth maps whose codimensions are not positive containing the following functions and maps as simplest natural ones.

\begin{itemize}
	\item Morse functions on spheres with exactly two singular points.  
	\item Canonical projections of unit spheres.
\end{itemize}
  
We define {\it special generic} maps. 
A {\it singular} point of a smooth map between two smooth maps is a point in the manifold of the domain where the rank of the differential is smaller than both the dimensions of the two manifolds. The {\it singular} set of the map is the set of all singular points of the map.
  
  \begin{Def}
  	\label{def:1}
  	Let $m \geq n \geq 1$ be integers.
  	A smooth map from an $m$-dimensional manifold with no boundary into an $n$-dimensional one is {\it special generic} if around each singular point there exist suitable local coordinates and it is represented by $$(x_1,\cdots,x_m) \rightarrow (x_1,\cdots,x_{n-1},{\Sigma}_{j=1}^{m-n+1} {x_{n+j-1}}^2)$$ via these local coordinates.
  \end{Def}
  
  \begin{Prop}
  	In Definition \ref{def:1}, the singular set of the map is an {\rm (}$n-1${\rm )} smooth closed submanifold of the manifold of the domain and has no boundary. Furthermore, the restriction of the special generic map to the singular set is a smooth immersion.
  \end{Prop}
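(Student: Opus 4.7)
The plan is to read both statements directly off the special generic local normal form. Near any singular point pick coordinates so that the map is $f\colon (x_1,\ldots,x_m)\mapsto (x_1,\ldots,x_{n-1},\sum_{j=1}^{m-n+1}x_{n+j-1}^{2})$. Its Jacobian has the $(n-1)\times(n-1)$ identity block in the upper-left and a single extra row $(0,\ldots,0,2x_n,\ldots,2x_m)$. Hence the rank equals $n$ wherever $(x_n,\ldots,x_m)\neq 0$, and drops to $n-1$ exactly on the coordinate plane $\{x_n=\cdots=x_m=0\}$.

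Next, I would conclude that the singular set $S(f)$ is locally cut out by the smooth functions $x_n,\ldots,x_m$, whose differentials are linearly independent, so it is a smooth submanifold of codimension $m-n+1$ and therefore of dimension $n-1$. Closedness of $S(f)$ as a subset of the domain is automatic, since the rank of $df$ is lower semicontinuous and hence $\{\mathrm{rank}(df)<n\}$ is closed. Because the domain has no boundary and the local model $\{x_n=\cdots=x_m=0\}$ is itself boundaryless, so is $S(f)$.

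Finally, restricting the normal form to $S(f)$ gives $(x_1,\ldots,x_{n-1},0,\ldots,0)\mapsto(x_1,\ldots,x_{n-1},0)$; equivalently, in the intrinsic coordinates $x_1,\ldots,x_{n-1}$ on $S(f)$, the map is the linear inclusion into $\mathbb{R}^n$, whose Jacobian has full rank $n-1$. Thus $f|_{S(f)}$ is an immersion at every singular point. Note that it need not be an embedding, because different singular charts may map onto overlapping regions in $\mathbb{R}^n$.

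I do not expect a substantive obstacle: everything reduces to a direct verification inside the normal form, and the global conclusions follow by patching because all of the verified properties (smoothness, codimension, boundarylessness, immersivity on the singular set) are local. The only point to remark explicitly is that at points outside $S(f)$ the differential already has rank $n$, so no additional singular contribution can appear and the local descriptions of $S(f)$ glue consistently with its emptiness at regular points.
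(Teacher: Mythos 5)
Your proof is correct: the paper states this proposition without any proof, treating it as a routine consequence of the local normal form, and your direct Jacobian computation (rank drops exactly on $\{x_n=\cdots=x_m=0\}$, giving a codimension $m-n+1$ submanifold on which the map restricts to the standard linear inclusion) is precisely the verification the paper leaves implicit. Your added remarks on closedness via lower semicontinuity of rank and on the fact that $f|_{S(f)}$ is an immersion but not necessarily an embedding are accurate and complete the local-to-global patching cleanly.
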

  
Unit spheres are regarded as simplest smooth closed manifolds. They are topologically spheres. In considerable cases, smooth manifolds which are homeomorphic to unit spheres and not diffeomorphic to ones admit no special generic maps in considerable situations. See \cite{calabi,saeki,saeki2,wrazidlo}.

More generally, the topologies and the differentiable structures of manifolds admitting such maps are strongly restricted in general. \cite{nishioka,saeki,saeki2,saekisakuma,saekisakuma2,sakuma,wrazidlo3} explicitly show this.

On the other hands, special generic maps seem to represent elementary and fundamental manifolds naturally in some senses. Manifolds represented as connected sums of products of manifolds diffeomorphic to unit spheres and more general manifolds seeming to be regarded as such manifolds in geometry admit natural special generic maps in suitable situations.

This means explicitly that special generic maps are attractive in singularity theory of differentiable maps and applications to algebraic topology and differential topology of manifolds. 

Our paper mainly concerns some studies of the author on restrictions on the cohomology rings of the manifolds admitting such maps. Related studies are \cite{kitazawa1,kitazawa2,kitazawa3,kitazawa4,kitazawa5,kitazawa6,kitazawa7} for example. Studies on the homology groups are in the articles referred to before. They are presented in the third section.

We introduce Main theorems. We need some fundamental terminologies, notions and notation on algeberaic topology such as homology groups, cohomology groups and cohomology rings. We also expect readers have sufficient knowledge on them and we review later shortly.
For an integer $k>0$, ${\mathbb{R}}^k$ denotes the $k$-dimensional Euclidean space, which is regarded as a smooth manifold canonically. ${\mathbb{R}}^1$ is also denoted by $\mathbb{R}$, containing the ring of all rational numbers as a subring, denoted by $\mathbb{Q}$, and the ring of all integers, denoted by $\mathbb{Z} \subset \mathbb{Q}$. 

\begin{MainThm}
	\label{mthm:1}
	Let $k$ be an arbitrary integer.
	Let $A$ be the quotient ring of the form $\mathbb{Z}/k\mathbb{Z}$ where $k\mathbb{Z}$ is the set of all integers of the form $ki$ {\rm (}$i \in \mathbb{Z}${\rm )}. 
	Let $m \geq 7$ be an integer and $M$ an $m$-dimensional closed and simply-connected manifold. Assume also the existence of elements $u_{0,1}$ and $u_{0,2}$ of the 2nd cohomology group $H^2(M;A)$ enjoying the following properties.
	\begin{itemize}
		\item The cup product $u_{0,1} \cup u_{0,2}$ is not the zero element of $H^4(M;A)$.
		\item We can choose no element of $u_0 \in H^4(M;\mathbb{Z})$ enjoying both the following properties.
		\begin{itemize}
			\item
			The order of $u_0$ is infinite.
			\item The cup product $u_{0,1} \cup u_{0,2}$ is obtained by changing the coefficient ring of the element $u_0$ using the natural quotient map from $\mathbb{Z}$ onto $A$ canonically. 
		\end{itemize}	
	\end{itemize}
	
	Then $M$ admits no special generic maps into ${\mathbb{R}}^n$ for $n=1,2,3,4,5$. Furthermore, we cannot drop the condition $m>6$ here.
\end{MainThm}
\begin{MainThm}
	\label{mthm:2}
	Let $k>0$ be an arbitrary positive integer.
Let $A$ be the quotient ring of the form $\mathbb{Z}/k\mathbb{Z}$ where $k\mathbb{Z}$ is the set of all integers of the form $ki$ {\rm (}$i \in \mathbb{Z}${\rm )}. 
Let $M$ be a $6$-dimensional closed and simply-connected manifold. Assume also the existence of elements $u_{0,1}$ and $u_{0,2}$ of the 2nd cohomology group $H^2(M;A)$ enjoying the following properties.
\begin{itemize}
	\item The cup product $u_{0,1} \cup u_{0,2}$ is not the zero element of $H^4(M;A)$.
	\item $u_{0,i}$ is not obtained by changing the coefficient ring of any element $u_{i,\mathbb{Z}}$ using the natural quotient map from $\mathbb{Z}$ onto $A$ canonically. 
	\item We can choose no element of $u_0 \in H^2(M;\mathbb{Z})$ enjoying both the following properties.
	\begin{itemize}
		\item
		The order of $u_0$ is infinite.
		\item The cup product $u_{0,1} \cup u_{0,2}$ is obtained by changing the coefficient ring of the element $u_0$ using the natural quotient map from $\mathbb{Z}$ onto $A$ canonically. 
	\end{itemize}	
\end{itemize}

Then $M$ admits no special generic maps into ${\mathbb{R}}^n$ for $n=1,2,3,4,5$.
\end{MainThm}
\begin{MainThm}
	\label{mthm:3}
	
	Let $m \geq 7$ be an integer and $M$ an $m$-dimensional closed and simply-connected manifold. Assume also the existence of a family $\{u_j\}_{j=1}^l$ of $l>0$ elements of the 2nd cohomology group $H^4(M;\mathbb{Z})$ enjoying the following properties.
	\begin{itemize}
		\item For some integer $k_j$ and a commutative ring $A_j$ isomorphic to the quotient ring of the form $\mathbb{Z}/k_j\mathbb{Z}$ where $k_j\mathbb{Z}$ is as in Main Theorem \ref{mthm:1}, 
		there exist two elements $u_{j,1}, u_{j,2} \in H^2(M;A_j)$ and the cup product $u_{j,1} \cup u_{j,2} \in H^4(M;A)$ is obtained from $u_j$ by changing the coefficient ring using the canonical quotient map from $\mathbb{Z}$ onto $A_j$ and not the zero element.. 
		\item The elements in $\{u_j\}_{j=1}^l$ are mutually independent{\rm :} their orders are infinite of course. 
	\end{itemize}
	
	If $M$ admits a special generic map $f:M \rightarrow {\mathbb{R}}^5$, then the number of the singular set of $f$ must be at least $l+1$.
\end{MainThm}
\begin{MainThm}
	\label{mthm:4}
	Let $m \geq 7$ be an arbitrary integer. We have the following two.
	\begin{enumerate}
		\item $S^2 \times S^2 \times S^{m-4}$ admits a special generic map into ${\mathbb{R}}^5$ and the minimum of the numbers of connected components of the singular sets of special generic maps there into ${\mathbb{R}}^5$ there is $2$. 
		\item A manifold represented as a connected sum of $l_0>1$ copies of $S^2 \times S^2 \times S^{m-4}$ considered in the smooth category admits a special generic map into ${\mathbb{R}}^5$ and the minimum of the numbers of connected components of the singular sets of special generic maps there into ${\mathbb{R}}^5$ is $l_0+1$. 
		\end{enumerate}
\end{MainThm}

The organization of our paper is as follows. The next section is for preliminaries. We introduce fundamental terminologies, notions and notation on smooth manifolds, diffeomorphisms and bundles and algebraic topology for example. We also review fundamental properties of special generic maps. As an advanced exposition, in the third section, we introduce important results on special generic maps, mainly ones presented shortly before. 
Mainly, we concentrate on special generic maps on closed and simply-connected manifolds whose dimensions are at least $6$ in our paper. We add a short general exposition on closed and simply-connected manifolds, which are important objects in algebraic topology and differential topology of manifolds. The fourth section is devoted to arguments on Main Theorems. We prove Main Theorems and compare to results presented in the third sections.

\section{Preliminaries on smooth manifolds, diffeomorphisms and bundles, fundamental algebraic topology and special generic maps.}

\subsection{Smooth manifolds, diffeomorphisms and bundles.}
${\mathbb{R}}^k$ is also a Riemannian manifold endowed with the standard Euclidean
 metric. $||x|| \in {\mathbb{R}}^k$ denotes the distance between $x \in {\mathbb{R}}^k$ and the origin $0 \in {\mathbb{R}}^k$ where the distance is induced canonically from the metric. $S^k:=\{x \in {\mathbb{R}}^{k+1} \mid ||x||=1\}$ denotes the {\it $k$-dimensional unit sphere} for an integer $k \geq 0$. This is also a closed smooth submanifold with no boundary. It is homeomorphic to a sphere and connected for $k \geq 1$ and it is a two-point set with the discrete topology for $k=0$.
 $D^k:=\{x \in {\mathbb{R}}^{k} \mid ||x||=1\}$ denotes the {\it $k$-dimensional unit disk} for an integer $k \geq 1$. This is also a closed smooth submanifold and connected.
 
  A {\it diffeomorphism} means a smooth map with no singular points which is also a homeomorphism. A {\it diffeomorphism on a smooth manifold} means a diffeomorphism from the manifold onto the manifold itself.
 The {\it diffeomorphism group} of a smooth manifold is defined as the group of all diffeomorphisms on the manifolds. We regard this as the topological space endowed with the {\it Whitney $C^{\infty}$} topology. For this topology, see \cite{golubitskyguillemin} for example. Two smooth manifolds are {\it diffeomorphic} if there exists a diffeomorphism from a manifold to the other manifold and this gives an equivalence relation on the class of all smooth manifolds. 

We call a closed and connected smooth manifold which is homeomorphic to a unit sphere a {\it homotopy sphere}. We say that a homotopy sphere which is (not) diffeomorphic to a unit sphere a {\it standard} (resp. an {\it exotic}) sphere. 
 
A {\it smooth} bundle means a bundle whose fiber is a smooth manifold and whose structure group is the diffeomorphism group of the manifold. A {\it linear} bundle means a bundle whose fiber is a Euclidean space, a unit sphere, or a unit disk and whose structure group consists of all linear transformations, where we (can) define linear transformations canonically. 

\subsection{Fundamental algebraic topology.}
For algebraic topology, see also \cite{hatcher}.

Let $A$ be a commutative ring.
 Let $(X,X^{\prime})$ be a pair of topological spaces satisfying $X^{\prime} \subset X$ where the spaces may be empty.
$H_j(X,X^{\prime};A)$ (resp. $H^j(X,X^{\prime};A)$) denotes the {\it $j$-th} ({\it co}){\it homology group} of the pair $(X,X^{\prime})$ whose {\it coefficient ring} is $A$. If $X^{\prime}$ is empty, then "the pair" is changed into the space $X$ and we use $H_j(X;A)$ (resp. $H^j(X;A)$) instead.

For a topological space $X$, we consider the direct sum $H^{\ast}(X;A):={\oplus}_j H^j(X;A)$ for all integers. This has the structure of a graded commutative algebra over $A$ where the product ${\cup}_{j=1}^l u_j$ of a sequence $\{u_j\}_{j=1}^l \subset H^{\ast}(X;A)$ is defined as the {\it cup product} of the sequence. In the case $l=2$, we also use $u_1 \cup u_2$ for the cup product of the ordered pair. This graded algebra is the {\it cohomology ring} of $X$ whose {\it coefficient ring} is $A$.

We can also define the {\it $j$-th homotopy group} ${\pi}_j(X)$ for a topological space $X$ of some suitable class. 
For example, the class of topological spaces homeomorphic to connected cell complexes satisfy this. 
For $j=1$, this is also the {\it fundamental group} of $X$.

For pairs of topological spaces $(X_1,{X_1}^{\prime})$ and $(X_2,{X_2}^{\prime})$ satisfying the relation ${X_j}^{\prime} \subset X_j$ where the spaces may be empty, we consider a continuous map $c:X_1 \rightarrow X_2$ mapping the subspace ${X_1}^{\prime}$ into ${X_2}^{\prime}$. We have the canonical homomorphisms such as ${c}_{\ast}:H_j(X_1,{X_1}^{\prime};A) \rightarrow H_j(X_2,{X_2}^{\prime};A)$, ${c}^{\ast}:H^j(X_2,{X_2}^{\prime};A) \rightarrow H^j(X_2,{X_2}^{\prime};A)$,
${c}^{\ast}:H^{\ast}(X_2;A) \rightarrow H^{\ast}(X_2;A)$ and ${c}_{\ast}:{\pi}_j(X_1) \rightarrow {\pi}_j(X_2)$.

Hereafter, for a manifold $X$, or more generally, a CW complex $X$, ${\dim X}$ denotes the dimension. 

For a compact, connected and oriented manifold $X$ whose boundary $\partial X$ may not be empty, we have a unique generator of $H_{\dim X}(X,\partial X;A)$, which is a free module of rank $1$. This is compatible with the orientation is defined as the fundamental class of $X$. 

For a manifold $Y$, for an element $u \in H_j(Y;A)$, suppose that there exist a compact, connected and orientable manifold $X$ of dimension $j$ and an embedding $i_X$ satisfying some conditions making the embedding not being so wild and mapping the boundary to the boudnary and the inteiror to the inteirior, ${i_X}_{\ast}(u_X)=u$ for the fundamental class $u_X$ of a suitably oriented X. Then $u$ is said to be {\it represented} by $X$. We note that in the case where the category we consider is the PL categoty, (equivalently the piecewise smooth), and the smooth category, for example, we do not need the condition to avoid the wildness of the embedding. Poincar\'e duality theorem for compact, connected and oriented manifolds is important. More precisely, for such a manifold $X$ whose boundary $\partial X$ may not be empty, the {\it Poincar\'e duals} to elements of $H_j(X,\partial X;A)$, $H^j(X,\partial X;A)$, $H_j(X;A)$ and $H^{j}(X;A)$ are uniquely defined as elements of $H^{\dim X-j}(X;A)$, $H^{\dim X-j}(X;A)$, $H^{\dim X-j}(X,\partial X;A)$ and $H_{\dim X-j}(X,\partial X;A)$ uniquely. 

For fundamental classes, Poincar\'e duality and related notions and theory here, we do not need the orientations of the manifolds in the case $A$ consists of elements whose orders are at most $2$.

We define the {\it cohomology dual} $u^{\ast} \in H^j(X,X^{\prime};A)$ to an element of $u$ in a basis of the submodule generated by all elements whose orders are infinite of $H_j(X,X^{\prime};A)$ consisting of elements which can not be divided by elements other than units
 where $X$ and $X^{\prime}$ are topological spaces satisfying $X^{\prime} \subset X$ and allowing themselves to be empty. We can define the cohomology dual to this given element uniquely.

\subsection{Fundamental properties of special generic maps.}
\begin{Prop}[\cite{saeki}]
	\label{prop:2}
	Let $m \geq n \geq 1$ be integers.
	\begin{enumerate}
		\item
		\label{prop:2.1}
		 Let $f:M \rightarrow N$ be a special generic map from an $m$-dimensional closed and connected manifold $M$ into an $n$-dimensional manifold $N$ with no boundary. Then we obtain the following objects and properties.
		\begin{enumerate}
			\item An $n$-dimensional compact and connected smooth manifold $W_f$.
			\item A smooth surjection $q_f:M \rightarrow W_f$.
			\item A smooth immersion $\bar{f}:W_f \rightarrow N$ enjoying the relation $f=\bar{f} \circ q_f$.

		\item Furthermore, 
		there exists a collar neighborhood $N(\partial W_f)$ of the boundary $\partial W_f \subset W_f$ and the composition of $f {\mid}_{f^{-1}(N(\partial W_f))}$ with the canonical projection to $\partial W_f$ gives a linear bundle over $\partial W_f$ whose fiber is the {\rm (}$m-n+1${\rm )}-dimensional unit disk $D^{m-n+1}$ where the manifold of the target of the former map is restricted to $N(\partial W_f)$.
		\item In addition, the restriction of $f$ to the preimage of the complementary set of the interior ${\rm Int}\ N(\partial W_f) \subset W_f$ gives a smooth bundle over the complementary set of ${\rm Int}\ N(\partial W_f) \subset W_f$ whose fiber is an {\rm (}$m-n${\rm )}-dimensional standard sphere for $m>n$ and diffeomorphic to the $0$-dimensional unit sphere $S^0$ for $m-n=0$. 
	\end{enumerate}
\item
\label{prop:2.2}
 Conversely, suppose that a smooth immersion of a compact and connected manifold $\bar{N}$ and a smooth immersion $\bar{f_N}$ of this manifold into an $n$-dimensional manifold $N$ with no boundary are given. Then we have an $m$-dimensional closed and connected manifold $M_0$ and a special generic map $f_0:M_0 \rightarrow N$ satisfying properties similar to ones with the triviality of the two bundles presented in {\rm (}\ref{prop:2.1}{\rm )}. In other words, we have the following objects and properties as before.  
 
 	\begin{enumerate}
 	\item $\bar{N}$ is identified with $W_{f_0}$ where "$f$ in $W_f$" is replaced by $f_0$ in the notation.
 	\item A smooth surjection $q_{f_0}:M \rightarrow W_{f_0}$.
 	\item $\bar{f_N}$ is identified with the immersion $\bar{f_0}:W_{f_0} \rightarrow N$ where "$f$ in $W_f$" and "$f$ in $\bar{f}$" are replaced by $f_0$ in the notation as before.
 	
 	\item Furthermore, 
 	there exists a collar neighborhood $N(\partial W_{f_0})$ of the boundary $\partial W_{f_0} \subset W_{f_0}$ and the composition of $f_0 {\mid}_{{f_0}^{-1}(N(\partial W_{f_0}))}$ with the canonical projection to $\partial W_{f_0}$ gives a trivial linear bundle over $\partial W_{f_0}$ whose fiber is the {\rm (}$m-n+1${\rm )}-dimensional unit disk $D^{m-n+1}$ where the manifold of the target of the former map is restricted to $N(\partial W_{f_0})$.
 	\item In addition, the restriction of $f_0$ to the preimage of the complementary set of the interior ${\rm Int}\ N(\partial W_{f_0}) \subset W_{f_0}$ gives a trivial smooth bundle over the complementary set of ${\rm Int}\ N(\partial W_{f_0}) \subset W_{f_0}$ whose fiber is an {\rm (}$m-n${\rm )}-dimensional standard sphere for $m>n$ and diffeomorphic to the $0$-dimensional unit sphere $S^0$ for $m-n=0$.
 	\item If $N$ is orientable or more generally $\bar{N}$ is orientable, then $M_0$ is constructed as an orientable manifold.
 \end{enumerate}
\end{enumerate}
\end{Prop}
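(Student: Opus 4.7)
For (1), the plan is to define $W_f$ as the Stein factorization of $f$: declare $x \sim y$ in $M$ iff $f(x)=f(y)$ and $x,y$ lie in the same connected component of the fiber $f^{-1}(f(x))$, set $W_f := M/{\sim}$, and let $q_f$ be the quotient map. Then $f$ factors uniquely as $f=\bar{f}\circ q_f$ for a continuous $\bar{f}$. The bulk of the argument is to endow $W_f$ with a smooth $n$-manifold-with-boundary structure so that $q_f$ is smooth and $\bar{f}$ is a smooth immersion. Away from the singular set $S(f)$, $f$ is a submersion that locally looks like a projection $\mathbb{R}^n\times S^{m-n}\to\mathbb{R}^n$, so $q_f$ is locally the first-factor projection and $\bar{f}$ is a local diffeomorphism. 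Near a singular point, using Definition \ref{def:1}, the fiber over $(a_1,\dots,a_{n-1},c)$ is a single point for $c=0$, a sphere of radius $\sqrt{c}$ for $c>0$, and empty for $c<0$; this exhibits $W_f$ locally as the half-space $\{c\ge 0\}\subset\mathbb{R}^n$, with boundary point the image of the singular point. The immersion property of $\bar{f}$ along $\partial W_f$ then follows from the hypothesis that $f|_{S(f)}$ is a smooth immersion.

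For clauses (1)(d) and (1)(e), I argue directly from the local normal form $(x_1,\dots,x_m)\mapsto(x_1,\dots,x_{n-1},\sum_{j=1}^{m-n+1}x_{n+j-1}^2)$. Pick a tubular neighborhood $N(\partial W_f)\subset W_f$ of $\partial W_f$ (the image of $S(f)$ under $q_f$) built from the local half-disk charts above; its preimage in $M$ is locally $\partial W_f\times D^{m-n+1}$, and patching via the transition functions of the normal bundle of $\partial W_f\subset W_f$ yields the desired linear $D^{m-n+1}$-bundle structure. Over $W_f\setminus\mathrm{Int}\,N(\partial W_f)$ the restriction of $q_f$ is a proper submersion onto a manifold, hence by Ehresmann's theorem it is a smooth fiber bundle; each fiber cobounds an $(m-n+1)$-disk inside the adjacent collar region, so it is the boundary $\partial D^{m-n+1}=S^{m-n}$ with its standard smooth structure (and is $S^0$ when $m=n$).

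For part (2), given $\bar{N}$ and the immersion $\bar{f_N}:\bar{N}\to N$, I build $M_0$ by gluing the trivial linear $D^{m-n+1}$-bundle over a collar $N(\partial \bar{N})\subset\bar{N}$ to the trivial smooth $S^{m-n}$-bundle over $\bar{N}\setminus\mathrm{Int}\,N(\partial \bar{N})$ via the identification $\partial D^{m-n+1}=S^{m-n}$ along their common boundary inside $\bar{N}$. The resulting projection $M_0\to\bar{N}$, composed in the collar with the radial collapse making it match the Definition \ref{def:1} normal form, defines $q_{f_0}$, and $f_0:=\bar{f_N}\circ q_{f_0}$ is special generic by construction with both bundles trivial. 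If $\bar{N}$ is orientable, I choose trivializations compatibly with orientations of the fibers, so that $M_0$ is orientable.

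The step I expect to be the main obstacle is the standard-sphere conclusion in (1)(e): Ehresmann's argument only produces a smooth bundle with some closed manifold fiber, and to identify the fiber as a standard sphere I rely on the adjacent linear $D^{m-n+1}$-bundle from (1)(d), whose sphere boundary is standard by construction; connectedness of $W_f\setminus\mathrm{Int}\,N(\partial W_f)$ (inherited from connectedness of $M$) then forces every fiber to be standard. A secondary delicate point is globalizing the local collar picture of (1)(d) to a genuine linear disk bundle over all of $\partial W_f$, which requires that the transition maps among the local normal-form charts on $M$ descend to linear transition maps on the normal bundle of $\partial W_f$ in $W_f$.
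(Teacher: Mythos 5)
The paper itself contains no proof of Proposition \ref{prop:2}: it is quoted from \cite{saeki}. Your route --- Stein factorization, local normal-form analysis to endow $W_f$ with a smooth structure with boundary, tubular-neighborhood/linearization arguments for clause (1)(d), Ehresmann's theorem plus the adjacency-to-the-collar argument for the standard-sphere fiber in (1)(e), and the explicit gluing of a trivial disk bundle to a trivial sphere bundle for the converse (2) --- is exactly the standard argument behind the cited result, and for $m>n$ it is essentially complete. One imprecision: the linear structure in (1)(d) does not come from the normal bundle of $\partial W_f \subset W_f$ (that is a line bundle); it comes from the rank-$(m-n+1)$ normal bundle of the singular set $S(f)\subset M$, or equivalently from linearizing the transition maps of the normal-form charts, which fix the singular locus and preserve the level spheres of $\sum_{j} x_{n+j-1}^2$ and hence can be reduced to orthogonal maps.

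There is, however, a genuine gap in the case $m=n$, which the statement explicitly includes (the $S^0$ clauses). When $m=n$, every fiber of $f$ is a finite set, so every connected component of every fiber is a single point, and the Stein factorization you take as the definition of $W_f$ is the identity: $W_f\cong M$, $q_f=\mathrm{id}$, $\bar f=f$, and $\bar f$ is then \emph{not} an immersion, since it has fold points. Concretely, for $f=\cos : S^1 \rightarrow \mathbb{R}$ your construction yields $W_f=S^1$ and $\bar f=\cos$, whereas the proposition requires $W_f=[-1,1]$, $q_f=\cos$ viewed as a map onto $[-1,1]$, and $\bar f$ the inclusion. Your local analysis masks this: the step ``the fiber over $(a_1,\dots,a_{n-1},c)$ is a sphere of radius $\sqrt{c}$, \dots this exhibits $W_f$ locally as the half-space $\{c\ge 0\}$'' uses connectedness of that sphere, which holds only when $m-n+1\ge 2$. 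For $m=n$ one must define $W_f$ by a different identification, collapsing the two local sheets of each fold together, and proving that this is globally well defined is exactly the extra work this case requires. A second, statement-level caveat (not an error of yours, but your proof of (1)(e) silently relies on its absence): if $N$ is closed, a special generic map may have empty singular set, and then there is no collar to compare fibers with --- a surface bundle over $S^1$ projected to $S^1$ is vacuously special generic with non-spherical fibers --- so the standard-sphere conclusion genuinely needs $S(f)\neq\emptyset$, which is automatic for $N={\mathbb{R}}^n$ (the only case used in this paper) since $M$ is compact.
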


Hereafter, connected sums and boundary connected sums of manifolds are considered in the smooth category unless otherwise stated.
\begin{Ex}
	\label{ex:1}
\begin{enumerate}
	\item \label{ex:1.1}
	 For an arbitrary canonical projection $f$ of an arbitrary unit sphere, $W_f$ is regarded as the unit disk in the Euclidean space of the target.
	\item \label{ex:1.2}
	 Let $m \geq n \geq 2$ be integers.
	Consider an arbitrary $m$-diemnsional closed and connected manifold $M$ represented as a connected sum of products of two standard spheres where at least one of each pair of the standard spheres is smaller than $n$. Let the sequence of the products denoted by $\{S^{n_j} \times S^{m-n_j}\}_{j=1}^l$ where the sequence is denoted as a sequence of products of unit spheres of length $l>0$ with $n_j<n$. $M$ admits a special generic map $f:M \rightarrow {\mathbb{R}}^n$ enjoying the following three.
	\begin{enumerate}
		\item $W_f$ in Proposition \ref{prop:2} is a smoothly embedded $n$-dimensional manifold diffeomorphic to one represented as a boundary connected sum of $l$ manifolds in the sequence $\{S^{n_j} \times D^{n-n_j}\}_{j=1}^l$.
		\item The map $\bar{f}$ in Proposition \ref{prop:2} is an embedding.
		\item $f$ is constructed as a map like $f_0$ in Proposition \ref{prop:2} (\ref{prop:2.2}).
		\end{enumerate}  
\end{enumerate}
\end{Ex}
We can check Example \ref{ex:1} by investigating \cite{saeki} for example. This is also regarded as a fundamental exercise on theory of Morse functions and special generic maps. To discuss this more rigorously, we may need some important technique on singularity theory of differentiable maps, including fundamental theory of Morse functions and some related theory on (smooth) manifolds. 

In addition, hereafter, we also need such theory and related theory on differential topology to understand arguments rigorously.

 For related technique, consult \cite{golubitskyguillemin, milnor,milnor2} for example. 
\section{Existing advanced studies on special generic maps.}

\begin{Prop}[E.g. \cite{kitazawa0.1,kitazawa0.2,kitazawa0.3,saeki,saekisuzuoka}]
	\label{prop:3}
	Let $m > n \geq 1$ be integers. Let $A$ be a commutative ring.
	Let $f:M \rightarrow N$ be a special generic map from an $m$-dimensional closed and connected manifold $M$ into an $n$-dimensional manifold $N$ with no boundary. Then $q_f$ in Proposition \ref{prop:2} induces the following isomorphisms of groups for an arbitrary integer $0 \leq j \leq m-n$.
	\begin{enumerate}
		\item ${q_f}_{\ast}:H_j(M;A) \rightarrow H_j(W_f;A)$. 
		\item ${q_f}^{\ast}:H^j(W_f;A) \rightarrow H^j(M;A)$.
		\item ${q_f}_{\ast}:{\pi}_j(M;A) \rightarrow {\pi}_j(W_f;A)$. 
	\end{enumerate}
\end{Prop}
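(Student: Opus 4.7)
The plan is to exploit the explicit fibered structure of $q_f$ given by Proposition~\ref{prop:2}(\ref{prop:2.1}). Set $M_D := f^{-1}(N(\partial W_f))$ and $M_S := M \setminus f^{-1}(\mathrm{Int}\,N(\partial W_f))$, with corresponding pieces $W_{f,1} := N(\partial W_f)$ and $W_{f,2} := W_f \setminus \mathrm{Int}\,W_{f,1}$. By items (d) and (e) of Proposition~\ref{prop:2}(\ref{prop:2.1}), $q_f|_{M_D} \colon M_D \to W_{f,1}$ is a linear $D^{m-n+1}$-bundle projection and hence a homotopy equivalence onto $W_{f,1} \simeq \partial W_f$, while $q_f|_{M_S} \colon M_S \to W_{f,2}$ is a smooth $S^{m-n}$-bundle and $W_{f,2} \hookrightarrow W_f$ is a homotopy equivalence across the collar.

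The central computation is the induced map on the sphere-bundle piece, which I would analyse via the Serre spectral sequence of the fibration $S^{m-n} \hookrightarrow M_S \to W_{f,2}$. Because $H_q(S^{m-n};A)$ vanishes for $0 < q < m-n$, only the rows $q=0$ and $q=m-n$ carry nonzero entries, and the sole possible nontrivial differential is the transgression $d_{m-n+1}$. This immediately gives isomorphisms $(q_f|_{M_S})_{\ast}\colon H_j(M_S;A) \xrightarrow{\cong} H_j(W_{f,2};A)$ for $j < m-n$, and in degree $j = m-n$ a short exact sequence leaving at most a single copy of $A$ generated by the fibre class $[S^{m-n}]$ as extra content. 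The analogous long exact sequence of the fibration handles the homotopy groups in the same range, using $\pi_j(S^{m-n}) = 0$ for $j < m-n$.

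Next I would compare the Mayer--Vietoris sequences for $M = M_D \cup M_S$ and $W_f = W_{f,1} \cup W_{f,2}$ via the chain map induced by $q_f$. The intersections $M_D \cap M_S$ and $W_{f,1} \cap W_{f,2}$ are controlled by the same disk/sphere-bundle analysis applied over $\partial W_f$, so three of the four columns in the five-lemma diagram are already isomorphisms through degree $m-n$. A diagram chase promotes the remaining column, namely $(q_f)_{\ast}\colon H_j(M;A)\to H_j(W_f;A)$, to an isomorphism in the range $0 \leq j \leq m-n$ as well. The cohomology statement then follows by naturality of the universal coefficient theorem, and the homotopy statement follows from a van Kampen-type gluing in low degrees combined with the fibration long exact sequence above.

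The main obstacle will be the endpoint $j = m-n$. The Serre spectral sequence alone yields only a surjection there, with potential kernel generated by the fibre class $[S^{m-n}]$; what rescues the argument is that every such fibre over a point of $\partial W_f$ bounds a disk $D^{m-n+1}$ inside $M_D$, so $[S^{m-n}]$ is killed by the inclusion $M_S \hookrightarrow M$. Making this cancellation rigorous inside the Mayer--Vietoris five-lemma, uniformly in the commutative coefficient ring $A$ and without imposing orientability on $W_f$, is the delicate point of the proof.
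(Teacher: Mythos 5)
Your route is genuinely different from the one the paper intends. The paper does not argue inside $M$ at all: it derives Proposition~\ref{prop:3} from Proposition~\ref{prop:4}, namely from the compact $(m+1)$-dimensional manifold $W$ with $\partial W=M$ which collapses to $W_f$, so that $q_f=r\circ i_M$ with $r\colon W\rightarrow W_f$ a homotopy equivalence. Since $W$ is a thickening of the $n$-dimensional spine $W_f$, it admits a handle decomposition with handles of index at most $n$; read upside down, $W$ is obtained from $M\times [0,1]$ by attaching handles of index at least $m-n+1$, and the attaching spheres of the index-$(m-n+1)$ handles are fibre spheres of $q_f$, each of which bounds a fibre disk of the $D^{m-n+1}$-bundle over the collar $N(\partial W_f)$. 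Hence these attaching maps are null-homotopic in $M$, and the isomorphisms on $H_j$ and $\pi_j$ for all $j\leq m-n$ follow in one stroke. Your argument instead stays inside $M$: Serre spectral sequences for the two bundle pieces and a comparison of Mayer--Vietoris sequences. For homology and cohomology this is correct: the five-lemma chase at the endpoint $j=m-n$ goes through precisely because the only failure of injectivity of $H_{m-n}(M_S;A)\rightarrow H_{m-n}(W_{f,2};A)$ is the image of the fibre class, and that class maps to $0$ in $H_{m-n}(M;A)$ since a fibre over a point of $\partial W_{f,2}$ bounds an embedded fibre disk inside $M_D$; the cohomology statement then follows from the integral homology isomorphism and naturality of the universal coefficient sequence. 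You have isolated exactly the geometric mechanism that also powers the paper's route. (One point you should make explicit: when the sphere bundle is non-orientable the $E_2$-page requires local coefficients in $H_{m-n}(S^{m-n};A)$; this affects only the row $q=m-n$, so your range survives, but no orientability is assumed in the statement.)

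The genuine gap is the homotopy-group claim for $2\leq j\leq m-n$. Van Kampen computes $\pi_1$ only, and higher homotopy groups satisfy neither excision nor Mayer--Vietoris, so ``a van Kampen-type gluing combined with the fibration long exact sequence'' cannot, as stated, yield $\pi_j(M)\cong\pi_j(W_f)$ for $j\geq 2$. Two standard repairs exist. First, having established the $\pi_1$-isomorphism and the integral homology isomorphisms, pull the whole decomposition back to universal covers, rerun the homology comparison there, and apply the relative Hurewicz theorem to the pair formed by the mapping cylinder of $q_f$; at the endpoint $j=m-n$ you must once again argue geometrically that the relevant boundary homomorphism vanishes, using that the relative $(m-n+1)$-classes are represented by fibre disks whose boundary spheres bound disks in the disk-bundle part of the cover. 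Second, and more economically, switch for the homotopy statement to the bounding manifold $W$ of Proposition~\ref{prop:4}, where the dual-handle argument sketched above gives homology and homotopy simultaneously. Either patch is routine given what you have already proved, but neither is contained in your sketch as written.
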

Hereafter, it is an important fact that smooth manifolds are regarded as PL (or equivalently, piecewise smooth manifolds) in a canonical way and we regard smooth manifolds as such PL manifolds if we need. If a manifold is not smooth, PL, nor piecewise smooth, then it is well known to be regarded as a CW complex.

On the PL category or equivalently, the piecewise smooth category,
we need notions such as {\it PL homeomorphisms}, {\it piecewise smooth homeomorphisms} and two {\it PL homeomorphic} manifolds, which we can guess the definitions easily. For objects in these categories, CW complexes and more generally, cell complexes, the notion of {\it collapsing} or the notion that a complex {\it collapses} to another complex is defined. We need this notion in Proposition \ref{prop:4}. However, we can guess the definition easily or we can consult \cite{hudson} to know the definition for example. 

Hereafter, we may need some arguments on the PL category. For the PL category, \cite{hudson}, presented just before, is one of books presenting fundamental theory of the PL category systematically and see also \cite{bryant} for example.

It may regarded as a good exercise to prove Proposition \ref{prop:3} by using the following proposition and consulting the articles and books being referred to here. 

\begin{Prop}[E.g. \cite{kitazawa0.2,kitazawa0.3,saekisuzuoka}]
\label{prop:4}	
	In Proposition \ref{prop:3}, we have an {\rm (}$m+1${\rm )} compact and connected manifold $W$ in the topology {\rm (}PL or piecewise smooth{\rm )} category enjoying the following properties.
	\begin{enumerate}
		\item \label{prop:4.1}
		 $M$ is the boundary $\partial W \subset W$ as  a PL or piecewise smooth manifold if we consider the PL or piecewise smooth category.     
		\item \label{prop:4.2}
		$W$ collapses to $W_f$ where $W_f$ is embedded in $W$ in a suitable way. Furthermore, $q_f$ is regarded as the composition of the inclusion $i_M:M \rightarrow W$ with a continuous {\rm (}resp. PL or piecewise smooth{\rm )} map $r:W \rightarrow W_f$ giving a collapsing.
		\item \label{prop:4.3}
		Moreover, $r$ gives a bundle over $W_f$ whose fiber is {\rm PL} homeomorphic to the {\rm (}$m-n+2${\rm )}-dimensional unit disk $D^{m-n+1}$ and whose structure group consists of PL homeomorphisms in the case where we consider the PL category. 
		\item \label{prop:4.4}
		 We have the following relation between $r$ and the restriction of $r$ to the boundary $M=\partial W$ where we abuse the notation in Proposition \ref{prop:2}.
		\begin{enumerate}
			\item \label{prop:4.4.1}
			 The restriction of $r$ to $r^{-1}(W_f-{\rm Int}\ N(\partial W_f))$ is regarded as the projection of a bundle whose fiber is PL homeomorphic to the {\rm (}$m-n+1${\rm )}-dimensional unit disk $D^{m-n+1}$ and whose structure group consists of piecewise smooth diffeomorphisms in the case where we consider the PL or the piecewise smooth category. The restriction of this to the intersection $r^{-1}(W_f-{\rm Int}\ N(\partial W_f)) \bigcap M$ is the original projection of the smooth bundle over $W_f-{\rm Int}\ N(\partial W_f)$. This original bundle is also a subbundle of the previous bundle whose fiber is $\partial D^{m-n+1} \subset D^{m-n+1}$.  
			\item \label{prop:4.4.2}
			 The composition of the restriction of $r$ to $r^{-1}(N(\partial W_f))$ with the canonical projection to $\partial W_f$ is regarded as the projection of a bundle whose fiber is PL homeomorphic to the {\rm (}$m-n+2${\rm )}-dimensional unit disk $D^{m-n+2}$ and whose structure group consists of piecewise smooth diffeomorphisms in the case where we consider the PL or the piecewise smooth cateogory. Note that the manifold of the target of the restriction of $r$ to $r^{-1}(N(\partial W_f))$ is restricted to $N(\partial W_f)$.
			
			 Furthermore, we explain about the restriction of this projection to the intersection $r^{-1}(N(\partial W_f)) \bigcap M$ as {\rm (}\ref{prop:4.4.1}{\rm )}. This is the original projection of the original linear bundle over $N(\partial W_f)$ in Proposition \ref{prop:2}. This original bundle is also a subbundle of the previous bundle whose fiber is PL homeomorphic to $D^{m-n+2}$ and the fiber of this subbundle is regarded as $D^{m-n+1} \subset \partial D^{m-n+2} \subset D^{m-n+2}$ where $D^{m-n+1}$ is embedded as a hemisphere of the unit sphere $S^{m-n+1}=\partial D^{m-n+2}$.  
				\end{enumerate}
	\end{enumerate}
	
\end{Prop}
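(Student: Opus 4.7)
The plan is to construct $W$ explicitly as the union of two PL (or piecewise smooth) disk bundles, one over the regular part of $W_f$ and one over a neighborhood of $\partial W_f$, glued along a common face. Informally, I will thicken each of the two pieces supplied by Proposition \ref{prop:2} by one extra dimension in the fiber direction, so that the resulting manifold $W$ is $(m+1)$-dimensional, fibers over $W_f$ with $D^{m-n+1}$-fibers, has boundary $M$, and deformation retracts fiberwise onto the zero section $W_f$.

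Over the complement $W_f \setminus \mathrm{Int}\, N(\partial W_f)$, Proposition \ref{prop:2} supplies a smooth $S^{m-n}$-bundle (with standard sphere fiber). Since the fiber is standard, this PL sphere bundle may be capped off fiberwise to a PL $D^{m-n+1}$-bundle $E_1$ over $W_f \setminus \mathrm{Int}\, N(\partial W_f)$. Over the collar $N(\partial W_f) \cong \partial W_f \times [0,1]$, Proposition \ref{prop:2} supplies a linear $D^{m-n+1}$-bundle with structure group $O(m-n+1)$; using the inclusion $O(m-n+1) \hookrightarrow O(m-n+2)$ and the realization of $D^{m-n+1}$ as a closed hemisphere of $\partial D^{m-n+2} = S^{m-n+1}$, I would enlarge this to the associated linear $D^{m-n+2}$-bundle $E_2$ over $\partial W_f$, and view its total space over the collar via the projection of the collar onto $\partial W_f$. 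This is exactly the bundle data demanded by (\ref{prop:4.4.2}).

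The two pieces $E_1$ and $E_2$ share a common face above the separating hypersurface $\partial\bigl(W_f \setminus \mathrm{Int}\, N(\partial W_f)\bigr)$: on the $E_1$ side it is the restriction of the capped disk bundle; on the $E_2$ side it is the upper-hemisphere subbundle of $\partial E_2$, corresponding to the copy of $D^{m-n+1}$ sitting inside $\partial D^{m-n+2}$ as a hemisphere. Gluing along this face produces the PL manifold $W$. Its boundary is assembled from $\partial E_1$, which reproduces the regular-fiber $S^{m-n}$-bundle portion of $M$, together with the complementary lower-hemisphere portion of $\partial E_2$, which reproduces the linear $D^{m-n+1}$-bundle portion of $M$; so $\partial W = M$ as required. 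A fiberwise radial contraction on each bundle onto its zero section assembles into a global PL map $r \colon W \to W_f$ realizing a collapse $W \searrow W_f$, and by construction $r \circ i_M = q_f$. Properties (\ref{prop:4.3}), (\ref{prop:4.4.1}), and (\ref{prop:4.4.2}) then read off directly from the bundle descriptions of $E_1$ and $E_2$.

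The main obstacle I expect is verifying the compatibility of the two bundle structures on the shared face: one must check that the PL $D^{m-n+1}$-bundle obtained by capping $\partial E_1$ agrees, on the common face, with the upper-hemisphere subbundle of $\partial E_2$ up to a PL bundle isomorphism, so that the glued object has a well-defined PL structure and so that $r$ is genuinely a $D^{m-n+1}$-bundle projection on the nose. This is also the step where one must stay inside the PL (or piecewise smooth) category rather than the smooth one, since smoothly capping a standard sphere bundle can meet obstructions that disappear in the PL setting. Once this matching is in hand, the collapse and the identification with $q_f$ are formal consequences of fiberwise simplicial deformation retractions on linear and PL disk bundles.
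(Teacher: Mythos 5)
Your construction is essentially the standard one from the sources the paper itself cites for this proposition (\cite{kitazawa0.2,kitazawa0.3,saekisuzuoka}); the paper gives no proof of its own, and your outline matches the construction in those references: cap the regular $S^{m-n}$-bundle fiberwise by the Alexander trick (which is precisely why the statement is placed in the PL/piecewise smooth category), thicken the linear $D^{m-n+1}$-bundle over the fold collar to the associated $D^{m-n+2}$-bundle via $O(m-n+1)\subset O(m-n+2)$, and glue along the hemisphere face. The compatibility check you flag as the main obstacle resolves exactly as your setup suggests: over the interface $Q=\partial\bigl(W_f\setminus {\rm Int}\ N(\partial W_f)\bigr)$ the sphere bundle is the sphere subbundle of the linear disk bundle, hence linear, and the fiberwise cone of a linear sphere bundle is canonically the associated linear disk bundle, which is exactly the upper-hemisphere subbundle of $\partial E_2$; so the glued PL manifold $W$ is well defined and $\partial W=M$ as you describe.

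The one step that fails as literally written is your definition of $r$ on $E_2$. If $r$ on $E_2$ is the fiberwise radial contraction onto the zero section $\partial W_f$, then for $p\in q_f^{-1}(N(\partial W_f))\subset M$ you get that $r(i_M(p))$ is the image of $q_f(p)$ under the collar projection $N(\partial W_f)\rightarrow \partial W_f$, not $q_f(p)$ itself; so the required identity $r\circ i_M=q_f$ of property (\ref{prop:4.2}) fails over the collar, and moreover the image of your $r$ is only $\bigl(W_f\setminus {\rm Int}\ N(\partial W_f)\bigr)\cup \partial W_f$ rather than $W_f$: the embedded copy of $W_f$ in $W$ must contain the whole collar, not just $\partial W_f$. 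The fix is local and routine: embed $N(\partial W_f)\cong \partial W_f\times [0,1]$ into $E_2$ as the fiberwise diameters joining the center of the lower hemisphere (a singular point of $f$, lying in $M$) to the center of the upper hemisphere (the zero section of $E_1{\mid}_Q$ after gluing), and define $r$ on each $D^{m-n+2}$-fiber as a PL map onto this diameter which restricts on the lower hemisphere $D^{m-n+1}$ to the fold map $x\mapsto ||x||^2$ (that is, to $q_f$) and collapses the upper hemisphere to the interior endpoint. With that choice $r\circ i_M=q_f$ holds on the nose, the fiberwise collapses assemble to a collapse of $W$ onto this embedded $W_f$, and composing $r{\mid}_{E_2}$ with the collar projection recovers the $D^{m-n+2}$-bundle projection demanded by (\ref{prop:4.4.2}). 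Everything else in your outline is sound.
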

\begin{Thm}
	\label{thm:1}
	Let $m>0$ be a positive integer.
	\begin{enumerate}
		\item \label{thm:1.1}
		 {\rm (}Reeb's theorem{\rm )}
		An $m$-dimensional closed and connected manifold $M$ admits a special generic map into $\mathbb{R}$ if and only if $M$ is either of the following two.
		\begin{enumerate}
			\item An $m$-dimensional homotopy sphere where $m \geq 1$ and $m \neq 4$.
			\item A $4$-dimensional standard sphere.
		\end{enumerate}
		\item \label{thm:1.2}
		 {\rm (}\cite{saeki}{\rm )}
		Let $m \geq 2$. An $m$-dimensional closed and connected manifold $M$ admits a special generic map into ${\mathbb{R}}^2$ if and only if $M$ is either of the following four.
		\begin{enumerate}
			\item An $m$-dimensional homotopy sphere where $m \geq 2$ and $m \neq 4$.
			\item A $4$-dimensional standard sphere.
			\item An $m$-dimensional manifold represented as a connected sum of finitely many manifolds diffeomorphic to the total spaces of smooth bundles over $S^1$ whose fibers are homotopy spheres where $m \neq 5$.
			\item A $5$-dimensional manifold represented as a connected sum of finitely many manifolds diffeomorphic to the total spaces of smooth bundles over $S^1$ whose fibers are $4$-dimensional standard spheres.
		\end{enumerate}
	\item \label{thm:1.3}
	 {\rm (}\cite{saeki}{\rm )}
	Let $n<m$ be a positive integer. 
	An $m$-dimensional closed and connected manifold $M$ admitting a special generic map $f$ into ${\mathbb{R}}^n$ 
	is a homotopy sphere if and only if $W_f$ is contractible.
	
	\item \label{thm:1.4}
	 {\rm (}\cite{calabi,saeki,saeki2}{\rm )}
	Let $m \geq 4$. An $m$-dimensional homotopy sphere $M$ admitting a special generic map $f$ into ${\mathbb{R}}^n$ with $m-n=1,2,3$ must be a standard sphere.
	\item \label{thm:1.5}
	 {\rm (}\cite{wrazidlo}{\rm )}
	Among all 28 types of $7$-dimensional oriented homotopy spheres, discussed in \cite{milnor} and later in \cite{eellskuiper} and \cite{kervairemilnor} for example, at least ones of 14 types admit no special generic maps into ${\mathbb{R}}^3$.  
	
		\item \label{thm:1.6}
		 {\rm (}\cite{saeki}{\rm )}
		Let $m \geq 4$. An $m$-dimensional closed and simply-connected manifold $M$ admitting a special generic map into ${\mathbb{R}}^3$ must be either of the following two.
	\begin{enumerate}
		\item An $m$-dimensional homotopy sphere which is not a $4$-dimensional exotic sphere.
		\item An $m$-dimensional manifold represented as a connected sum of finitely many manifolds diffeomorphic to the total spaces of smooth bundles over $S^2$ whose fibers are homotopy spheres. Furthermore, the {\rm (}$m-2${\rm )}-dimensional homotopy spheres here are not $4$-dimensional exotic spheres.
		\end{enumerate}
		\item \label{thm:1.7}
		 {\rm (}\cite{saeki}{\rm )}
		Let $m=4,5,6$. An $m$-dimensional closed and simply-connected manifold $M$ admits a special generic map into ${\mathbb{R}}^3$ if and only if $M$ is either of the following two.
	\begin{enumerate}
		\item An $m$-dimensional standard sphere.
		\item An $m$-dimensional manifold represented as a connected sum of finitely many manifolds diffeomorphic to the total spaces of linear bundles over $S^2$ whose fibers are diffeomorphic to the {\rm (}$m-2${\rm )}-dimensional unit sphere $S^{m-2}$.

	\end{enumerate}

	\item \label{thm:1.8}
	 {\rm (}\cite{nishioka} {\rm (}the case $m=5${\rm )} and \cite{kitazawa6} {\rm (}the case $m=6${\rm )}{\rm )}
	Let $m=5,6$. An $m$-dimensional closed and simply-connected manifold $M$ admits a special generic map into ${\mathbb{R}}^4$ if and only if $M$ is either of the following two.
\begin{enumerate}
	\item An $m$-dimensional standard sphere.
	\item An $m$-dimensional manifold represented as a connected sum of finitely many manifolds diffeomorphic to the total spaces of linear bundles over $S^2$ whose fibers are diffeomorphic to the {\rm (}$m-2${\rm )}-dimensional unit sphere $S^{m-2}$ or ones over $S^3$ whose fibers are diffeomorphic to the {\rm (}$m-3${\rm )}-dimensional unit sphere $S^{m-3}$.
\end{enumerate}
\end{enumerate}
\end{Thm}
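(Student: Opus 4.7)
The plan is to treat Theorem~\ref{thm:1} as a sequence of applications of the structure machinery in Propositions~\ref{prop:2}, \ref{prop:3} and \ref{prop:4}, together with the converse construction in Example~\ref{ex:1}. In every part, the existence of a special generic map $f\colon M\to\mathbb{R}^n$ is converted into a question about the $n$-dimensional quotient $W_f$ (a compact manifold with boundary, immersed in $\mathbb{R}^n$) plus a bundle whose generic fibre is an $(m-n)$-dimensional standard sphere. This reduces each classification to an $n$-dimensional problem, with $n\le 4$ throughout.

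For parts (\ref{thm:1.1}) and (\ref{thm:1.2}) I would analyse $W_f$ directly: when $n=1$ it is an arc, and Proposition~\ref{prop:2}(\ref{prop:2.1}) identifies $M$ with a twisted sphere obtained by gluing two copies of $D^m$ along $S^{m-1}$, recovering Reeb's theorem; the special role of $m=4$ comes from Cerf's result $\pi_0\mathrm{Diff}(S^3)=0$, forcing any $4$-dimensional twisted sphere to be the standard $S^4$. When $n=2$, $W_f$ is an immersed surface with boundary, and the $S^{m-1}$-bundle structure over circles in $\partial W_f$ (combined with the collar bundle of Proposition~\ref{prop:2}(\ref{prop:2.1})) is precisely the mechanism producing the total spaces of homotopy-sphere bundles over $S^1$ in the statement; the exceptional $m=5$ case reflects the smooth $5$-dimensional Poincar\'e theorem. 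Part (\ref{thm:1.3}) is essentially formal: Proposition~\ref{prop:3} makes ${q_f}_{\ast}\colon H_j(M;\mathbb{Z})\to H_j(W_f;\mathbb{Z})$ an isomorphism for $j\le m-n$, and combining this with Poincar\'e duality on $M$ and the collapsing of $W$ onto $W_f$ from Proposition~\ref{prop:4} upgrades ``$W_f$ acyclic'' to ``$W_f$ contractible'' via Whitehead's theorem, and conversely.

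For parts (\ref{thm:1.4})--(\ref{thm:1.8}) the strategy is to first show that $W_f$ is a simply-connected $n$-manifold with boundary, then classify such $W_f$ by the handle theory appropriate to $n=3,4$, and finally reconstruct $M$ from the resulting handle decomposition of $W_f$ together with the $D^{m-n+1}$- and $S^{m-n}$-bundles of Proposition~\ref{prop:2}; a boundary-connected-sum description of $W_f$ converts directly into the connected-sum description of $M$. Part (\ref{thm:1.4}) uses that the structure group of the generic $S^{m-n}$-bundle can be arranged linear in the range $m-n\le 3$, so no exotic fibre structures can arise. The converse directions in parts (\ref{thm:1.2}), (\ref{thm:1.7}) and (\ref{thm:1.8}) are handled uniformly by Example~\ref{ex:1} together with Proposition~\ref{prop:2}(\ref{prop:2.2}), which produces a special generic map from any immersed boundary-connected-sum model of the desired $W_f$ in $\mathbb{R}^n$.

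The main obstacle will be part (\ref{thm:1.5}): to show that at least $14$ of the $28$ oriented $7$-dimensional homotopy spheres admit no special generic map into $\mathbb{R}^3$, one must actually compute a diffeomorphism invariant on the exotic classes. The natural approach is to express the Eells--Kuiper $\mu$-invariant, or equivalently an appropriate Pontryagin number of a bounding $8$-manifold, in terms of data read off from the immersion $\bar f$ and the bundle structure over $W_f$, via the $(m+1)$-dimensional thickening $W$ of Proposition~\ref{prop:4}; constraints on $W_f$ as a simply-connected $3$-manifold with boundary then force the invariant to lie in a strict subset of $\mathbb{Z}/28\mathbb{Z}$, ruling out at least half of the exotic classes as in~\cite{wrazidlo}.
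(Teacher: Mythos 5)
First, a structural remark: the paper itself contains no proof of Theorem \ref{thm:1}. It is a survey statement, each item carrying its own citation (Reeb's theorem, \cite{saeki}, \cite{calabi,saeki,saeki2}, \cite{wrazidlo}, \cite{nishioka}, \cite{kitazawa6}), so your proposal can only be measured against those works. For items (\ref{thm:1.1})--(\ref{thm:1.4}), (\ref{thm:1.6}) and (\ref{thm:1.7}) your reduction --- pass to $W_f$ with its $D^{m-n+1}$-bundle over $\partial W_f$ and $S^{m-n}$-bundle over the rest, use linearity of the fibers when $m-n\le 3$, and use Proposition \ref{prop:2} (\ref{prop:2.2})/Example \ref{ex:1} for the converses --- is indeed the skeleton of Saeki's arguments, so there your plan is sound in outline. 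Even there, though, two points need repair. For (\ref{thm:1.4}), linearity of the bundles alone does not make $M$ standard: you need contractibility of $W_f$ (item (\ref{thm:1.3})), triviality of the bundle over it, the identification $M=\partial(W_f\times D^{m-n+1})$, and then the h-cobordism theorem applied to the contractible $(m+1)$-manifold $W_f\times D^{m-n+1}$, with separate arguments in the low-dimensional cases and with care because a contractible $W_f$ of dimension $4$ need not be $D^4$. And the exceptional case $m=5$ in item (\ref{thm:1.2}) is misattributed: it does not reflect the $5$-dimensional smooth Poincar\'e theorem but, exactly like the $m=4$ case of item (\ref{thm:1.1}), Cerf's theorem --- the $4$-dimensional fibers arise as preimages of arcs, hence as twisted spheres, hence are standard.

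Two steps of your plan would genuinely fail. (a) For item (\ref{thm:1.8}) you propose to ``classify such $W_f$ by the handle theory appropriate to $n=4$.'' This is not viable: simply-connected compact smooth $4$-manifolds with boundary admit no such classification; for instance, Mazur-type contractible $4$-manifolds are parallelizable, hence immerse in ${\mathbb{R}}^4$ by Hirsch's theorem, and therefore all occur as $W_f$ of special generic maps by Proposition \ref{prop:2} (\ref{prop:2.2}), so the correspondence $W_f\mapsto M$ is wildly many-to-one and the base manifolds cannot be listed. The actual proofs run in the opposite direction: from the special generic structure one extracts invariants of $M$ itself (e.g.\ freeness of $H_2(M;\mathbb{Z})$, the spin condition) and then invokes Barden's classification \cite{barden} of simply-connected $5$-manifolds (Nishioka, $m=5$) and the classification of simply-connected $6$-manifolds \cite{jupp,wall,zhubr,zhubr2} (Kitazawa, $m=6$) --- precisely the point the paper makes at the end of Section 3. (b) Item (\ref{thm:1.5}) is not proven at all: you acknowledge it as ``the main obstacle'' and offer the hope that the Eells--Kuiper invariant can be computed from the data of $\bar f$ and $W$. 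Wrazidlo's theorem is a substantial separate work, resting on an analysis of the fiberwise gluings of $D^3\times S^4$ with a linear $D^5$-bundle over $S^2$ (equivalently, on the Gromoll filtration of $\Theta_7$); nothing in your sketch actually forces the invariant into a proper subset of $\mathbb{Z}/28\mathbb{Z}$. As it stands, your text is a correct roadmap for roughly six of the eight items and a conjecture for the other two, not a proof.
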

\begin{Thm}[\cite{kitazawa1}]
	\label{thm:2}
	Let $m>n \geq 1$ be integers. Let $A$ be a commutative ring.
	For an $m$-dimensional closed and connected manifold $M$, suppose the existence of a sequence $\{u_j\}_{j=1}^{l} \subset H^{\ast}(M;A)$ of length $l>0$ satisfying the following conditions.
	\begin{itemize}
		\item The degree of each element is at most $m-n$.
		\item The sum of the degrees of these $l$ elements is at least $n$.
		\item The cup product ${\cup}_{j=1}^l u_j$ is not the zero element of $H^{\ast}(M;A)$.
	\end{itemize}
Then $M$ admits no special generic maps into any connected manifold $N$ which is not compact and has no boundary.
\end{Thm}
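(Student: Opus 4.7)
The plan is to argue by contradiction: assuming a special generic map $f \colon M \to N$ exists, I would lift the hypothesized cup product from $M$ up to $W_f$ via Proposition~\ref{prop:3}, and then extract a contradiction from the fact that $W_f$ is an $n$-dimensional compact manifold whose boundary is forced to be nonempty by the geometry of $N$.

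The lifting step is the easy one. Since each $u_j$ has degree $d_j \leq m-n$, the isomorphism $q_f^{\ast} \colon H^{d_j}(W_f;A) \to H^{d_j}(M;A)$ supplied by Proposition~\ref{prop:3} produces a unique $v_j \in H^{d_j}(W_f;A)$ with $q_f^{\ast}(v_j)=u_j$. Because $q_f^{\ast}$ is a morphism of graded commutative rings, I get $q_f^{\ast}(v_1 \cup \cdots \cup v_l) = u_1 \cup \cdots \cup u_l$, which is nonzero by hypothesis. Hence $v_1 \cup \cdots \cup v_l$ is a nonzero class in $H^d(W_f;A)$ for $d := d_1+\cdots+d_l \geq n$.

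The heart of the argument is showing $\partial W_f \neq \emptyset$, and here I would use that $N$ is connected, non-compact, and without boundary. If $\partial W_f$ were empty, Proposition~\ref{prop:2} would force $f$ to have no singular points (so $q_f$ would be a smooth $S^{m-n}$-bundle and $\bar{f} \colon W_f \to N$ would be an immersion between equidimensional manifolds). Then $f = \bar{f}\circ q_f$ would itself be a submersion from the compact manifold $M$ into $N$; its image $f(M)$ would be simultaneously open (by the open mapping property of submersions into boundaryless targets) and compact, hence clopen, hence all of $N$ by connectedness, contradicting the non-compactness of $N$.

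Knowing $\partial W_f \neq \emptyset$, the compact connected $n$-manifold $W_f$ admits a handle decomposition with no $n$-handles, so it has the homotopy type of a CW complex of dimension at most $n-1$; consequently $H^d(W_f;A)=0$ for every $d \geq n$, which contradicts the nonvanishing of $v_1 \cup \cdots \cup v_l$. The only step requiring any real thought is the boundary argument of the third paragraph, since the lifting in paragraph two and the cohomological dimension bound in paragraph four are standard; everything else is a direct application of Propositions~\ref{prop:2} and \ref{prop:3}.
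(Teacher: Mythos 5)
Your proposal is correct and takes essentially the same route as the proof behind Theorem \ref{thm:2} (which the paper cites from \cite{kitazawa1}, and whose ingredients are exactly the Propositions \ref{prop:2} and \ref{prop:3} you invoke): lift the classes through the ring homomorphism $q_f^{\ast}$ using the isomorphisms in degrees at most $m-n$, observe that the non-compactness of the connected boundaryless target $N$ forces $\partial W_f \neq \emptyset$ via the open-image argument, and conclude from the fact that a compact smooth $n$-manifold with nonempty boundary has the homotopy type of a complex of dimension at most $n-1$, so $H^{d}(W_f;A)=0$ for all $d \geq n$. No gaps.
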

\begin{Cor}
	\begin{enumerate}
		\item Let $m \geq 2$ be an integer. The $m$-dimensional real projective space admits no special generic maps into ${\mathbb{R}}^n$ for $1 \leq n \leq m-1$.
\item Let $m \geq 2$ be an integer. The $m$-dimensional complex projective space, which is an $m$-dimensional complex manifold and a $2m$-dimensional closed and simply-connected manifold, admits no special generic maps into ${\mathbb{R}}^n$ for $1 \leq n \leq 2m-2$.
	\end{enumerate}
\end{Cor}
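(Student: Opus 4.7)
The plan is to apply Theorem \ref{thm:2} directly, using the standard descriptions of the cohomology rings of projective spaces. In both parts the target is ${\mathbb{R}}^n$, which is connected, not compact, and has empty boundary, so Theorem \ref{thm:2} is available and it suffices in each case to exhibit a sequence $\{u_j\}_{j=1}^{l}$ satisfying the three bulleted conditions there.

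For the real projective space ${\mathbb{R}}P^m$ I would take $A={\mathbb{Z}}/2{\mathbb{Z}}$, for which $H^{\ast}({\mathbb{R}}P^m;A)$ is the truncated polynomial ring generated by a single class $x$ of degree $1$ modulo the relation $x^{m+1}=0$. Given $1\leq n\leq m-1$, choose the sequence $u_1=\cdots=u_n=x$ of length $l=n$. Each element has degree $1\leq m-n$ (using $n\leq m-1$), the sum of degrees equals $n$, and the cup product $x^n$ is nonzero in $H^n({\mathbb{R}}P^m;A)$ because $n\leq m$. Theorem \ref{thm:2} then rules out special generic maps into ${\mathbb{R}}^n$.

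For the complex projective space ${\mathbb{C}}P^m$, which has real dimension $2m$, I would instead use $A={\mathbb{Z}}$ and the fact that $H^{\ast}({\mathbb{C}}P^m;{\mathbb{Z}})={\mathbb{Z}}[y]/(y^{m+1})$ with $|y|=2$. For $1\leq n\leq 2m-2$, set $l=\lceil n/2\rceil$ and take $u_1=\cdots=u_l=y$. Each degree equals $2\leq 2m-n$; the sum of degrees is $2l\geq n$ by construction; and the cup product $y^l$ is nonzero, since one checks $l\leq m-1<m$ whenever $n\leq 2m-2$, so $y^l$ generates $H^{2l}({\mathbb{C}}P^m;{\mathbb{Z}})$. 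Theorem \ref{thm:2} again gives the conclusion.

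There is no substantive obstacle here; the argument is essentially a verification that the hypotheses of Theorem \ref{thm:2} are satisfied by well-known cup powers. The only points that require some care are matching the degree bound $\dim M-n$ to the correct ambient dimension (namely $m$ in the real case but $2m$ in the complex case), and ensuring that the chosen power of the generator lies strictly below the truncation exponent of the cohomology ring, which is what fixes the upper bounds $m-1$ and $2m-2$ on $n$ in the two cases.
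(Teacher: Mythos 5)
Your proposal is correct and takes essentially the same approach as the paper: the corollary is stated there as an immediate consequence of Theorem \ref{thm:2}, with the intended (unwritten) argument being exactly your verification of its three hypotheses via the truncated polynomial cohomology rings $H^{\ast}({\mathbb{R}}P^m;\mathbb{Z}/2\mathbb{Z})$ and $H^{\ast}({\mathbb{C}}P^m;\mathbb{Z})$. Your choices of coefficient rings, of powers of the generators, and your observation that the bounds $n \leq m-1$ and $n \leq 2m-2$ are precisely what make the degree condition and the nonvanishing of the cup power hold, all match the intended reading.
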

In the case of the $7$-dimensional real projective space, see also \cite{wrazidlo2}.
\begin{Thm}[\cite{kitazawa6}]
	\label{thm:3}
	Let $m \geq 2$ be an integer. The $m$-dimensional complex projective space admits no special generic maps into ${\mathbb{R}}^n$ for $1 \leq n \leq 2m$.
\end{Thm}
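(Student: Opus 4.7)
The plan is to focus on the two remaining cases $n=2m-1$ and $n=2m$, since the preceding Corollary (an immediate application of Theorem~\ref{thm:2} with all $u_j=x\in H^2(\mathbb{CP}^m;\mathbb{Z})$, the standard generator) already rules out $1\leq n\leq 2m-2$. For $n\in\{2m-1,2m\}$, the obstruction will be extracted from the rational first Pontryagin class $p_1(\mathbb{CP}^m;\mathbb{Q})=(m+1)x^2$, which is nonzero since $m\geq 2$.

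Assume there is a special generic map $f:\mathbb{CP}^m\rightarrow\mathbb{R}^n$. I would apply Proposition~\ref{prop:4} to realize $\mathbb{CP}^m=\partial W$ for a compact $(2m+1)$-dimensional PL (or piecewise smooth) manifold $W$ admitting a disk-bundle projection $r:W\rightarrow W_f$ whose fiber has dimension $2m-n+1\in\{1,2\}$ and with $q_f=r\circ i_M$. The immersion $\bar{f}:W_f\rightarrow\mathbb{R}^n$ forces $TW_f$ to be stably trivial, so $TW$ is stably isomorphic to $r^*(TW_f\oplus V_0)$, where $V_0$ is the vertical tangent bundle along the zero section $W_f\subset W$. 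Restricting to $M$, the stable tangent bundle of $\mathbb{CP}^m$ becomes rationally $q_f^*V_0$, whence $p_1(\mathbb{CP}^m;\mathbb{Q})=q_f^*p_1(V_0;\mathbb{Q})$. For $n=2m$, the rank-$1$ bundle $V_0$ has $p_1(V_0)=0$ for dimensional reasons, giving $p_1(\mathbb{CP}^m;\mathbb{Q})=0$ and contradicting $(m+1)x^2\neq 0$. For $n=2m-1$, the rank-$2$ bundle $V_0$ satisfies $p_1(V_0)=e(V_0)^2$.

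To finish the $n=2m-1$ case, I would prove as a lemma that $q_f^*:H^2(W_f;\mathbb{Q})\rightarrow H^2(\mathbb{CP}^m;\mathbb{Q})$ is the zero map. A short cup-product argument works: for any $\tilde u\in H^2(W_f;\mathbb{Q})$, the class $\tilde u^m$ lies in $H^{2m}(W_f;\mathbb{Q})$, which vanishes since $\dim W_f=n<2m$. Pulling back, $(q_f^*\tilde u)^m=0$ in $H^{2m}(\mathbb{CP}^m;\mathbb{Q})$; writing $q_f^*\tilde u=cx$, we obtain $c^m x^m=0$ in $\mathbb{Q}\cdot x^m$, which forces $c=0$. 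Applying this to $e(V_0)$ yields $(q_f^*e(V_0))^2=0$, so once again $p_1(\mathbb{CP}^m;\mathbb{Q})=0$, a contradiction.

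The main obstacle is to justify, in the PL/piecewise smooth category, both the stable tangent-bundle decomposition $TW\cong r^*(TW_f\oplus V_0)$ and the identity $p_1=e^2$ for rank-$2$ vertical bundles. In the smooth category both facts are standard; in the PL setting I would invoke Thom's topological invariance of rational Pontryagin classes (together with its standard consequences for low-rank bundles), or alternatively work with the genuine smooth linear bundle structures provided by Proposition~\ref{prop:2} on the complement of a collar of $\partial W_f$ and extend the characteristic-class identities across the singular locus by obstruction theory.
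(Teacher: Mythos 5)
Your proposal cannot be compared with a proof in this paper, because the paper does not prove Theorem \ref{thm:3}: the theorem is imported from \cite{kitazawa6}, and the present text only records it. Judging from the toolkit this paper and its companions develop and use (Propositions \ref{prop:5}--\ref{prop:8}, Theorem \ref{thm:5}, the proofs of the Main Theorems), that line of argument is homological: Poincar\'e duality in $W_f$ and $W$, and classes represented by preimages of embedded arcs and circles. Your route is genuinely different: a rational characteristic-class obstruction. Its core is correct. The reduction to $n\in\{2m-1,2m\}$ via Theorem \ref{thm:2} is exactly right; $TW_f$ is trivial because $\bar{f}$ is a codimension-zero immersion; $TM$ is stably $q_f^{\ast}(TW_f\oplus V_0)$; for $n=2m$ the rank-one $V_0$ has $p_1=0$; and for $n=2m-1$ your lemma that $q_f^{\ast}:H^2(W_f;\mathbb{Q})\rightarrow H^2(\mathbb{C}P^m;\mathbb{Q})$ vanishes is both correct and essential. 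Note why it is essential: without it one only gets $(q_f^{\ast}e(V_0))^2=(m+1)x^2$, which is no contradiction whenever $m+1$ is a perfect square (e.g.\ $m=3$; it is no accident that $\mathbb{C}P^3$ required separate treatment in \cite{kitazawa4}). What each approach buys: yours gives a clean, conceptual statement (a closed manifold with nonzero rational $p_1$ admits no special generic map into $\mathbb{R}^{\dim M}$, and, with your ring-specific lemma, none into $\mathbb{R}^{\dim M-1}$ for $\mathbb{C}P^m$), while the homological approach is the one that extends to the torsion phenomena of Main Theorems \ref{mthm:1}--\ref{mthm:3}, which rational characteristic classes cannot detect.

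The one genuine weak point is the PL issue you flag at the end, and of your two proposed repairs only the first can be made to work, with corrections. Topological invariance of rational Pontryagin classes is Novikov's theorem, not Thom's; what you actually need, since Proposition \ref{prop:4} produces $W$ only in the PL or piecewise smooth category, is PL invariance (Thom, Rokhlin--Schwartz). Moreover, invariance alone does not produce the splitting $TW\cong r^{\ast}(TW_f\oplus V_0)$: you must first know that $V_0$ and the vertical bundle are honest vector bundles, which holds here because PL disk (micro)bundles of fiber dimension at most $2$ reduce uniquely to linear ones ($\mathrm{PL}_k/O_k$ is contractible for $k\leq 3$). Your second repair, extending characteristic-class identities "across the singular locus by obstruction theory," does not work as sketched: the restriction homomorphism from $H^4(M;\mathbb{Q})$ to the cohomology of either piece of $M$ in Proposition \ref{prop:2} need not be injective (by excision and Lefschetz duality its kernel is controlled by groups like $H^2(\partial W_f;\mathbb{Q})$, which need not vanish), so knowing $p_1(TM)$ on each piece does not determine it. A cleaner version of that idea: because the fibers have dimension at most $2$, $\mathrm{Diff}(S^1)\simeq O(2)$ (and $\mathrm{Diff}(S^0)=O(1)$) makes the sphere bundle of Proposition \ref{prop:2} linear, so one can construct $W$ itself smoothly as the associated linear $D^{2m-n+1}$-bundle glued to the collar piece, and then your entire argument runs in the smooth category with no PL input at all.
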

\begin{Thm}[\cite{kitazawa3}]
	\label{thm:4}
	Let $m \geq 7$ be an integer.
	For an $m$-dimensional closed and simply-connected manifold $M$, suppose the existence of an element $u \in H^2(M;\mathbb{Z})$ such that the cup product $u \cup u \in H^4(M;\mathbb{Z})$ is not divisible by $2$. Then $M$ admits no special generic maps into ${\mathbb{R}}^5$. Furthermore, in this situation, we cannot drop the condition $m>6$. 
\end{Thm}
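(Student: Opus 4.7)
The plan is to suppose, for contradiction, that a special generic map $f\colon M\to\mathbb{R}^{5}$ exists, and then to deduce that $u\cup u$ is forced to be divisible by $2$ in $H^{4}(M;\mathbb{Z})$. The first step will be to invoke Proposition \ref{prop:3}: since $m-n=m-5\geq 2$, the pullback ${q_f}^{\ast}\colon H^{2}(W_f;\mathbb{Z})\to H^{2}(M;\mathbb{Z})$ is an isomorphism, so $u$ has a unique preimage $\tilde{u}\in H^{2}(W_f;\mathbb{Z})$. Because ${q_f}^{\ast}$ is a ring homomorphism, $u\cup u={q_f}^{\ast}(\tilde{u}\cup\tilde{u})$; hence it is enough to show that $\tilde{u}\cup\tilde{u}\in H^{4}(W_f;\mathbb{Z})$ is divisible by $2$.

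Next I would exploit the fact that $\bar{f}\colon W_f\to\mathbb{R}^{5}$ is a smooth immersion between manifolds of the same dimension. Its differential trivializes $TW_f\cong \bar{f}^{\ast}T\mathbb{R}^{5}$, so every Stiefel--Whitney class of $W_f$ vanishes. Proposition \ref{prop:3} also shows $\pi_{1}(W_f)\cong\pi_{1}(M)=0$, so $W_f$ is a simply-connected, orientable, compact $5$-manifold with boundary. Viewing $\tilde{u}$ as the first Chern class of a complex line bundle $L\to W_f$, choose a smooth section $s$ transverse to the zero section and whose restriction to $\partial W_f$ is also transverse; then $V:=s^{-1}(0)$ will be a compact oriented smoothly embedded $3$-submanifold with $\partial V\subset\partial W_f$ whose fundamental class is the Poincar\'e--Lefschetz dual of $\tilde{u}$.

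I would then run the standard self-intersection argument. Since $V$ is an orientable $3$-manifold, Stiefel's theorem yields that $TV$ is trivial, and together with the triviality of $TW_f|_{V}\cong TV\oplus\nu_V$ this forces the normal bundle $\nu_V$ to be stably trivial; in particular $w_{2}(\nu_V)=0$. The Poincar\'e--Lefschetz dual of $\tilde{u}\cup\tilde{u}$ is the self-intersection class in $H_{1}(W_f,\partial W_f;\mathbb{Z})$, which is represented by $(i_{V})_{\ast}\bigl(e(\nu_V)\cap [V,\partial V]\bigr)$, where $i_{V}\colon V\hookrightarrow W_f$ is the inclusion and $e(\nu_V)\in H^{2}(V;\mathbb{Z})$ is the Euler class. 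Reducing modulo $2$ replaces $e(\nu_V)$ by $w_{2}(\nu_V)=0$, so $\tilde{u}\cup\tilde{u}$ vanishes in $H^{4}(W_f;\mathbb{Z}/2\mathbb{Z})$. From the long exact sequence of the pair $(W_f,\partial W_f)$, Poincar\'e--Lefschetz duality, and $H_{1}(W_f;\mathbb{Z})=0$, we get that $H^{4}(W_f;\mathbb{Z})\cong H_{1}(W_f,\partial W_f;\mathbb{Z})$ embeds into the free abelian group $H_{0}(\partial W_f;\mathbb{Z})$, hence is torsion-free; vanishing of the mod-$2$ reduction therefore forces $\tilde{u}\cup\tilde{u}\in 2H^{4}(W_f;\mathbb{Z})$, and pulling back by ${q_f}^{\ast}$ yields the desired divisibility of $u\cup u$, a contradiction.

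The main obstacle I anticipate is the geometric self-intersection identification combined with the stably-trivial-normal-bundle step; an essentially equivalent route is the relative Wu formula for $(W_f,\partial W_f)$, using that triviality of $TW_f$ forces every Wu class $v_{i}$ to vanish and hence that $\mathrm{Sq}^{2}$ acts as $0$ on the relevant degree, so $\bar{\tilde u}\cup\bar{\tilde u}=\mathrm{Sq}^{2}\bar{\tilde u}=0$. For the sharpness assertion ($m=6$ cannot be allowed) one has to exhibit a closed simply-connected $6$-manifold admitting a special generic map into $\mathbb{R}^{5}$ whose $H^{2}(\,\cdot\,;\mathbb{Z})$ contains a class whose cup square is an odd multiple of a generator of $H^{4}$; such an example can be constructed through Proposition \ref{prop:2}(\ref{prop:2.2}) starting from a suitable $5$-dimensional $W_{f_{0}}$ immersed in $\mathbb{R}^{5}$ whose associated sphere bundle produces the desired cup-product behaviour, showing that the hypothesis $m\geq 7$ in the argument above cannot be weakened.
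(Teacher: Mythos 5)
Your argument for the non-existence part ($m \geq 7$) is correct, and it is a genuinely different route from anything in this paper: the paper does not reprove Theorem \ref{thm:4} (it is quoted from \cite{kitazawa3}), and its own closest arguments --- the proofs of Main Theorems \ref{mthm:1} and \ref{mthm:2} --- work upstairs, representing the Poincar\'e dual of a cup product by preimages of circles and of the arcs $L_j$ of Propositions \ref{prop:5} and \ref{prop:7} and then sorting classes by their orders. You instead stay entirely inside $W_f$: the codimension-zero immersion $\bar{f}$ trivializes $TW_f$, the Lefschetz dual of $\tilde{u}$ is realized by a properly embedded $3$-manifold $V$, parallelizability of compact orientable $3$-manifolds gives $w(\nu_V)=1$, hence the self-intersection class, and so $\tilde{u} \cup \tilde{u}$, is even, and pulling back along ${q_f}^{\ast}$ (this is exactly where $m \geq 7$ is used, via Proposition \ref{prop:3}) finishes. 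All the individual steps (projection formula, $e(\nu_V)=i_V^{\ast}\tilde{u}$, kernel of mod-$2$ reduction equal to $2H^4$) are sound; the torsion-freeness detour is unnecessary, and your Wu-formula aside is imprecise as stated, since for a compact $5$-manifold with boundary the relative Wu formula controls $\mathrm{Sq}^2$ on $H^3(W_f,\partial W_f;\mathbb{Z}/2\mathbb{Z})$, not on absolute degree-$2$ classes; making that variant honest requires a Cartan-formula argument together with $\mathrm{Sq}^1\bar{\tilde{u}}=0$.

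The sharpness claim is a genuine gap, and the route you sketch cannot be completed. Proposition \ref{prop:2} (\ref{prop:2.2}) only produces maps with \emph{trivial} bundles, i.e. $M_0=\partial (W_{f_0}\times D^2)$, and for such $M_0$ your own method shows that \emph{every} $u \in H^2(M_0;\mathbb{Z})$ has $u \cup u$ divisible by $2$: the extension $0 \to H^2(W_{f_0};\mathbb{Z}) \to H^2(M_0;\mathbb{Z}) \to H^1(W_{f_0},\partial W_{f_0};\mathbb{Z}) \to 0$ is split by sending the class of a closed $4$-cycle $Z \subset {\rm Int}\ W_{f_0}$ (dual to an element of $H^1(W_{f_0},\partial W_{f_0};\mathbb{Z}) \cong H_4(W_{f_0};\mathbb{Z})$) to the Poincar\'e dual of $[Z \times \{pt\}] \subset W_{f_0}\times S^1 \subset M_0$; these section classes have self-intersection zero (push off in the $S^1$-direction), the classes pulled back from $W_{f_0}$ have even squares because $W_{f_0}$ is parallelizable, and all cross terms carry a factor $2$. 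So a sharpness example must involve a nontrivial circle bundle over the interior of $W_{f_0}$, which Proposition \ref{prop:2} (\ref{prop:2.2}) never supplies. A correct example available from the paper's toolkit: let $X$ be the total space of the nontrivial linear $S^2$-bundle over $S^2$, i.e. $X \cong \mathbb{C}P^2 \# \overline{\mathbb{C}P^2}$, which admits a special generic map into $\mathbb{R}^3$ by Theorem \ref{thm:1} (\ref{thm:1.7}); taking the product with ${\rm id}_{S^2}$ and composing with an embedding of $\mathbb{R}^3 \times S^2$ into $\mathbb{R}^5$ (the same trick as in the proof of Main Theorem \ref{mthm:4}) yields a special generic map from the closed and simply-connected $6$-manifold $X \times S^2$ into $\mathbb{R}^5$; here the circle bundle over the interior of $W_f \cong S^2 \times S^2 \times D^1$ has Euler number $1$, and the pullback $u$ to $X \times S^2$ of a class $a \in H^2(X;\mathbb{Z})$ with $a \cup a$ a generator of $H^4(X;\mathbb{Z})$ satisfies that $u \cup u$ is a primitive element of $H^4(X \times S^2;\mathbb{Z})$, hence not divisible by $2$.
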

\begin{Thm}[E.g. \cite{kitazawa4,kitazawa5}]
	\label{thm:5}
		Let $m \geq 6$ be an integer.
	For an $m$-dimensional closed and simply-connected manifold $M$, suppose the existence of a special generic map $f :M \rightarrow {\mathbb{R}}^5$ whose singular set is connected.
	
	Then for any commutative ring isomorphic to the quotient ring $A:=\mathbb{Z}/k\mathbb{Z}$ of $\mathbb{Z}$ where $k\mathbb{Z}$ is as presented in Main Theorem \ref{mthm:1}, the cup product of any two ordered pair of elements of $H^2(M;A)$ is always the zero element of $H^4(M;A)$. 
\end{Thm}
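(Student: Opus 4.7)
The plan is to reduce the statement to the vanishing of $H^4(W_f;A)$, where $W_f$ is the quotient space furnished by Proposition \ref{prop:2}, and then deduce the cup-product statement on $M$ by pulling back along $q_f$. The hypothesis that the singular set is connected will enter essentially through the connectedness of $\partial W_f$.

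First I would establish the basic properties of $W_f$. Proposition \ref{prop:2} supplies a compact connected $5$-dimensional manifold $W_f$ with boundary, a smooth surjection $q_f:M\to W_f$, and an immersion $\bar f:W_f\to\mathbb{R}^5$. Because $\bar f$ is an immersion of a $5$-manifold into $\mathbb{R}^5$, the interior of $W_f$ is locally diffeomorphic to $\mathbb{R}^5$ and hence $W_f$ is orientable. Proposition \ref{prop:3} applied with $j=1\leq m-n$ gives $\pi_1(W_f)\cong\pi_1(M)=0$, so $W_f$ is simply-connected. The local form in Definition \ref{def:1} shows that $q_f$ restricts to a diffeomorphism from the singular set onto $\partial W_f$, so connectedness of the singular set forces $\partial W_f$ to be connected.

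The core computation is that $H^4(W_f;A)=0$. By Lefschetz duality, $H^4(W_f;A)\cong H_1(W_f,\partial W_f;A)$, and the segment
\begin{equation*}
H_1(W_f;A)\longrightarrow H_1(W_f,\partial W_f;A)\longrightarrow H_0(\partial W_f;A)\longrightarrow H_0(W_f;A)
\end{equation*}
of the long exact sequence of the pair has its leftmost group vanishing by simple-connectedness and its rightmost map an isomorphism $A\to A$ because both spaces are connected, so the middle group is zero. The analogous argument in one lower degree, dualized via Lefschetz, also yields $H_4(W_f;A)\cong H^1(W_f,\partial W_f;A)=0$.

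The last ingredient is the surjectivity of $q_f^{\ast}:H^2(W_f;A)\to H^2(M;A)$, which is delivered directly by Proposition \ref{prop:3} when $m\geq 7$ but is the main obstacle when $m=6$. For that case, the plan is to pass to the $(m+1)$-dimensional compact orientable manifold $W$ of Proposition \ref{prop:4}, with $\partial W=M$ and $W$ collapsing to $W_f$; the factorization $q_f=r\circ i_M$ with $r$ a homotopy equivalence reduces the surjectivity to the vanishing of $H^3(W,M;A)$. Lefschetz duality on $W$ identifies this with $H_{m-2}(W_f;A)$, which vanishes for $m\geq 8$ by dimension, equals $H_5(W_f;A)=0$ at $m=7$ since $W_f$ is compact with nonempty boundary, and equals $H_4(W_f;A)=0$ at $m=6$ by the computation above. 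Given surjectivity, writing $u_i=q_f^{\ast}(v_i)$ for some $v_i\in H^2(W_f;A)$ yields $u_1\cup u_2=q_f^{\ast}(v_1\cup v_2)\in q_f^{\ast}(H^4(W_f;A))=0$, completing the argument.
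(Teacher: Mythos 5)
Your proposal is correct, but it takes a genuinely different route from the argument the paper uses for this circle of results. The paper (which cites Theorem \ref{thm:5} from external work, and whose own machinery re-derives its content inside the proofs of Main Theorems \ref{mthm:1}--\ref{mthm:3}) argues geometrically: under suitable orientations, the Poincar\'e dual of a cup product of two degree-$2$ classes of $M$ is represented by preimages under $q_f$ of smoothly embedded circles in ${\rm Int}\ W_f$ together with preimages of the arcs $L_j$ of Propositions \ref{prop:5} and \ref{prop:7} joining distinct connected components of the singular set; circle preimages are null-homologous in $M$, so when the singular set is connected there are no arcs and every such cup product vanishes. You instead make the connectedness of the singular set do purely homological work: it gives connectedness of $\partial W_f$, whence Lefschetz duality and the long exact sequence of the pair yield $H^4(W_f;A) \cong H_1(W_f,\partial W_f;A)=0$; then every degree-$2$ class of $M$ is a pullback under ${q_f}^{\ast}$ --- by Proposition \ref{prop:3} when $m \geq 7$, and when $m=6$ by your argument through the manifold $W$ of Proposition \ref{prop:4} using $H^3(W,M;A) \cong H_4(W;A) \cong H_4(W_f;A) \cong H^1(W_f,\partial W_f;A)=0$ --- so all cup products of degree-$2$ classes are pulled back from the zero group. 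Your route is more elementary (no intersection theory, no explicit submanifold representatives), works verbatim for any coefficient ring, and treats $m=6$ and $m \geq 7$ almost uniformly; the paper's geometric route is heavier but yields strictly more, since the arc classes it produces are of infinite order and mutually independent, which is exactly what powers the quantitative lower bounds on the number of singular components in Main Theorems \ref{mthm:1}--\ref{mthm:3} --- information invisible to your vanishing argument. Two points you should state explicitly rather than leave implicit: orientability of $W$, needed for Lefschetz duality in the $m=6$ step, follows because $W$ is homotopy equivalent to the simply-connected $W_f$; and the identification of connected components of the singular set with connected components of $\partial W_f$ is the standard structural fact recorded in Proposition \ref{prop:2}.
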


Most of existing results introduced here are on special generic maps on closed and simply-connected manifolds.

Closed and simply-connected manifolds whose dimensions are at least $5$ are central objects in classical algebraic topology and differential topology. Their classifications are due to sophisticated ways and related results are via algebraic and abstract tools. This is due to the situation that the dimensions are sufficiently high. $4$-dimensional cases are difficult due to the situation that the dimensions are not sufficiently high and difficulty in understanding differentiable structures are well-known whereas some works such as Freedman's ones have contributed to topological classifications for example. The $3$-dimensional case is so-called Poincar\'e's problem, solved by Perelman. 

This is also important that classifications of $5$, $6$ and $7$-dimensional closed and simply-connected manifolds via more explicit algebraic systems are known. \cite{barden} is for $5$-dimensional ones. This is a key fact in the work of Nishioka \cite{nishioka} in Theorem \ref{thm:1} (\ref{thm:1.8}).
\cite{jupp,wall,zhubr,zhubr2} are on $6$-dimensional cases. Some of them and Theorem \ref{thm:2} are key ingredients in studies in \cite{kitazawa6} of the author, one of which is presented in Theorem \ref{thm:1} (\ref{thm:1.8}). For $7$-dimensional cases, \cite{kreck} is one of worls on such classifications.
\section{On Main Theorems.}
We discuss Main Theorems and related explicit theory.
\subsection{Some important propositions} 
We prove some important propositions. Some ingredients are due to methods in \cite{kitazawa3,kitazawa4,kitazawa5,kitazawa6} for example.
\begin{Prop}
	\label{prop:5}
	Let $m>n \geq 1$ be integers. Let $M$ be an $m$-dimensional closed, connected and orientable manifold. Let $l$ denote the number of connected components of the singular set of $f$ and suppose that $l>1$. Then there exist exactly $l-1$ elements of $H_{n-1}(M;\mathbb{Z})$ enjoying the following properties.
	\begin{enumerate}
		\item Each of these $l-1$ elements is represented by some connected component of the singular set of $f$.
		\item Each of these $l-1$ elements is not divisible by any integer greater than $1$.
		\item These $l-1$ elements form a basis of a submodule of $H_{n-1}(M;\mathbb{Z})$. 
		\end{enumerate}
\end{Prop}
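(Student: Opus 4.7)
The plan is to detect each class $[S_i] \in H_{n-1}(M;\mathbb{Z})$ by intersection pairings against suitable $(m-n+1)$-cycles in $M$ built as preimages under $q_f$ of embedded arcs in $W_f$ joining different components of $\partial W_f$.

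First I would fix the setup. By Proposition \ref{prop:2}, the Stein factorization $f = \bar f \circ q_f$ realises $W_f$ as a compact connected $n$-manifold with boundary, and the restriction of $q_f$ to $S(f)$ is a diffeomorphism onto $\partial W_f$. This identifies the $l$ components $S_1,\ldots,S_l$ of $S(f)$ with the $l$ components $C_1,\ldots,C_l$ of $\partial W_f$. After orienting $M$ and $W_f$ (the latter being orientable via its codimension-zero immersion $\bar f$ into the ambient $n$-manifold, which may be taken orientable as in the main applications), induce orientations on each $C_i \cong S_i$.

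Next I would construct the auxiliary cycles. For each ordered pair $i\neq j$ pick a smoothly embedded arc $\gamma_{i,j}\colon [0,1] \to W_f$ with $\gamma_{i,j}(0)=x_i\in C_i$, $\gamma_{i,j}(1)=x_j\in C_j$, with interior in $\mathrm{int}\,W_f$ and transverse to $\partial W_f$ at each endpoint. Set $E_{i,j} := q_f^{-1}(\gamma_{i,j}([0,1]))$. By Proposition \ref{prop:2}, over the interior of the arc the preimage is an $S^{m-n}$-bundle over a contractible base, hence trivial, and near each endpoint the local normal form of Definition \ref{def:1} together with the collar description shows that the preimage is a smooth $(m-n+1)$-disk. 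Gluing yields a smoothly embedded $S^{m-n+1} \hookrightarrow M$ realising the suspension of a regular fibre.

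Then I would compute intersections and conclude. The cycle $E_{i,j}$ meets $S(f)$ only at the two singular points $p_i \in S_i$ and $p_j \in S_j$ lying over the endpoints. In the coordinates of Definition \ref{def:1} at $p_i$, the tangent space to $E_{i,j}$ is spanned by $\partial_{x_n},\ldots,\partial_{x_m}$ while that of $S_i$ is spanned by $\partial_{x_1},\ldots,\partial_{x_{n-1}}$, so the intersection is transverse with sign $\pm 1$, and similarly at $p_j$. With compatible orientations we obtain integer intersection numbers $[E_{i,j}] \cdot [S_k] = \delta_{i,k} - \delta_{j,k}$. Consider the $l-1$ classes $[S_2],\ldots,[S_l] \in H_{n-1}(M;\mathbb{Z})$: if $\sum_{k=2}^l a_k[S_k]=0$, pairing with $[E_{1,j}]$ for $j=2,\ldots,l$ gives $-a_j=0$, so these classes are linearly independent and form a basis of a free rank-$(l-1)$ submodule; each $[S_j]$ with $j\geq 2$ is primitive because any integer dividing it must divide the pairing $[E_{1,j}]\cdot[S_j] = -1$. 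Since each is represented by a component of $S(f)$, all three stated conditions hold.

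The one delicate technical point is verifying smoothness and transversality of $E_{i,j}$ at its endpoint singular points, which requires carefully combining the local normal form of Definition \ref{def:1} with the disk-bundle structure in Proposition \ref{prop:2} and the transversality of $\gamma_{i,j}$ to $\partial W_f$; once that is in hand, the remainder of the argument is a formal intersection-pairing manipulation.
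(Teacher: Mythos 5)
Your proposal is correct and takes essentially the same route as the paper: the paper's proof also fixes a base component $C_0$ of the singular set, joins it to each of the other $l-1$ components by disjoint embedded arcs $L_j$ in $W_f$ whose preimages ${q_f}^{-1}(L_j)$ are closed $(m-n+1)$-dimensional (homotopy) spheres, and then invokes intersection theory/Poincar\'e duality exactly as in your pairing computation. Your write-up merely makes explicit the transversality and the intersection numbers $[E_{1,j}]\cdot [S_k]=\pm\delta_{j,k}$ that the paper leaves implicit.
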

\begin{proof}
	In the proof, we abuse the notation in Proposition \ref{prop:2}.
	
	Choose an arbitrary connected component $C_0$ of the singular set of $f$. For the remaining $l-1$ connected components, we have a family $\{L_j\}$ of $l-1$ $1$-dimensional compact and connected submanifolds of $W_f$ enjoying the following properties.
	\begin{itemize}
		\item For each connected component $C_j$ of the $l-1$ connected components of the singular set of $f$, $L_j$ is diffeomorphic to a closed interval. The boundary of $L_j$, consisting of exactly two points, are smoothly embedded in the boundary $\partial W_f$ and one point is embedded in $C_0$ and the other point is in $C_j$. The interior is embedded in the interior ${\rm Int}\ W_f \subset W_f$.  
		\item The restriction of $q_f$ to the preimage ${q_f}^{-1}(L_j)$ is regarded as the domain of a Morse function on an ($m-n+1$)-dimensional homotopy sphere.
		\item $L_{j_1}$ and $L_{j_2}$ are mutually disjoint for $j_1 \neq j_2$. 
	\end{itemize}
Poincar\'e duality or intersection theory of closed submanifolds with no boundaries or elements of the homology and cohomology groups for $W_f$ and $M$ completes the proof. 
\end{proof}

For a finitely generated commutative group $G$, let ${\rm Fi}(G)$ denote the subgroup generated by all elements whose orders are finite. We have an internal direct sum decomposition of $G$ into ${\rm Fi}(G)$ and some free subgroup ${\rm Fr}(G)$. ${\rm Fr}(G)$ is not unique in general whereas the rank of ${\rm Fr}(G)$ is invariant. This is due to a well-known fundamental theorem on the structures of finitely generated commutative groups.

\begin{Prop}
	\label{prop:6}
	Let $n \geq 5$ be an integer and $m=n+1$.
	Let $M$ be an $m$-dimensional closed and simply-connected manifold.
	Let $f:M \rightarrow {\mathbb{R}}^n$ be a special generic map. Then ${q_f}_{\ast}$ maps ${\rm Fi}(H_2(M;\mathbb{Z}))$ isomorphically onto ${\rm Fi}(H_2(W_f;\mathbb{Z}))$ where we abuse the notation in Proposition \ref{prop:2}.
\end{Prop}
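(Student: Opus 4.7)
My plan is to realize ${q_f}_{\ast}$ as the composition of an isomorphism $r_{\ast}:H_2(W;\mathbb{Z}) \to H_2(W_f;\mathbb{Z})$ with the map $i_{\ast}:H_2(M;\mathbb{Z}) \to H_2(W;\mathbb{Z})$ induced by the inclusion $M = \partial W \hookrightarrow W$, where $W$ and $r$ are the $(m+1)$-dimensional thickening and retraction furnished by Proposition \ref{prop:4}. Since $W$ collapses to $W_f$, $r_{\ast}$ is an isomorphism on all of homology, so everything reduces to analyzing $i_{\ast}$ through the long exact sequence of the pair $(W,M)$. Note also that $M$, $W_f$ (by Proposition \ref{prop:3}), and hence $W$ are all simply connected, so in particular orientable, and Poincar\'e--Lefschetz duality is available on both $W$ and $W_f$.

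I will first invoke Poincar\'e--Lefschetz duality $H_k(W,M;\mathbb{Z}) \cong H^{m+1-k}(W;\mathbb{Z}) \cong H^{m+1-k}(W_f;\mathbb{Z})$. Because $f$ cannot be a submersion on a compact manifold targeting the noncompact $\mathbb{R}^n$, the boundary $\partial W_f$ is nonempty, so $W_f$ has the homotopy type of an at most $(n-1)$-dimensional CW complex and $H^n(W_f;\mathbb{Z})=0$. Hence $H_2(W,M;\mathbb{Z})=0$, which makes ${q_f}_{\ast}$ surjective. The relevant kernel group is $H_3(W,M;\mathbb{Z}) \cong H^{n-1}(W_f;\mathbb{Z})$, and a second use of Poincar\'e--Lefschetz duality on $W_f$ together with the long exact sequence of $(W_f,\partial W_f)$ and $\pi_1(W_f)=1$ gives $H_3(W,M;\mathbb{Z}) \cong H_1(W_f,\partial W_f;\mathbb{Z}) \cong \mathbb{Z}^{c-1}$, where $c$ is the number of connected components of $\partial W_f$. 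A free basis is represented by mutually disjoint arcs $L_1,\dots,L_{c-1}\subset W_f$, each $L_j$ joining a fixed component $C_0$ of $\partial W_f$ to a distinct other component $C_j$, exactly as in the proof of Proposition \ref{prop:5}.

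The crucial step is to show that the kernel $K:=\ker i_{\ast}$, which a priori is only a quotient of $\mathbb{Z}^{c-1}$ sitting inside $H_2(M;\mathbb{Z})$, is actually free abelian of rank $c-1$. Since $r$ is a $D^{m-n+1}=D^2$-bundle and $L_j$ is contractible, $r^{-1}(L_j) \cong D^3$ and its boundary $S_j := q_f^{-1}(L_j) \subset M$ is a $2$-sphere; the connecting homomorphism $H_3(W,M;\mathbb{Z}) \to H_2(M;\mathbb{Z})$ sends the above basis to the classes $[S_j]$, so these classes generate $K$. To show they are $\mathbb{Z}$-independent I will pair them with the $(m-2)$-dimensional singular-set components $[C_k] \in H_{m-2}(M;\mathbb{Z})$ provided by Proposition \ref{prop:5}. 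Working in the local special generic normal form $(x_1,\dots,x_{n-1},x_n^2+x_{n+1}^2)$ around a singular point $p$ over an endpoint of $L_j$, the tangent space $T_pS_j$ is spanned by $\partial/\partial x_n, \partial/\partial x_{n+1}$ and is complementary to $T_pC_k$ (the tangent space to the singular set), so $S_j$ meets $C_k$ transversely in exactly one point when $k\in\{0,j\}$ and is disjoint from $C_k$ otherwise. Consequently the pairing matrix $([S_j]\cdot[C_k])_{j,k=1}^{c-1}$ is diagonal with entries $\pm 1$, forcing $[S_1],\dots,[S_{c-1}]$ to be $\mathbb{Z}$-independent and hence $K \cong \mathbb{Z}^{c-1}$.

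Once $K$ is known to be free, the short exact sequence $0 \to K \to H_2(M;\mathbb{Z}) \to H_2(W_f;\mathbb{Z}) \to 0$ splits, giving $H_2(M;\mathbb{Z}) \cong \mathbb{Z}^{c-1} \oplus H_2(W_f;\mathbb{Z})$, and so ${q_f}_{\ast}$ restricts to an isomorphism from $\mathrm{Fi}(H_2(M;\mathbb{Z}))$ onto $\mathrm{Fi}(H_2(W_f;\mathbb{Z}))$, as required. I expect the main obstacle to be the transversality and local-model computation for $S_j$ and $C_k$ at the singular endpoints of the arcs $L_j$: everything else is essentially bookkeeping with the long exact sequences and Poincar\'e--Lefschetz duality once the geometric model of Proposition \ref{prop:4} is in place.
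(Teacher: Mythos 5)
Your route is genuinely different from the paper's, and most of it is sound: factoring $q_{f\ast}=r_{\ast}\circ i_{\ast}$, getting surjectivity of $q_{f\ast}$ on $H_2$ from $H_2(W,M;\mathbb{Z})\cong H^{n}(W_f;\mathbb{Z})=0$, identifying $H_3(W,M;\mathbb{Z})\cong H_1(W_f,\partial W_f;\mathbb{Z})\cong {\mathbb{Z}}^{c-1}$ via the arcs of Proposition \ref{prop:5}, seeing that the kernel $K$ of $q_{f\ast}$ on $H_2$ is generated by the sphere classes $[S_j]$, and the local-model computation giving $[S_j]\cdot [C_k]=\pm \delta_{jk}$ (note that integral intersection numbers against $[C_k]$ require the components $C_k$ to be orientable; this does hold, since $C_k$ is diffeomorphic to a component of $\partial W_f$ and $W_f$ is orientable as it immerses in ${\mathbb{R}}^n$, and it is also what legitimizes Proposition \ref{prop:5}). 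The genuine gap is your final step: ``once $K$ is known to be free, the short exact sequence $0\to K\to H_2(M;\mathbb{Z})\to H_2(W_f;\mathbb{Z})\to 0$ splits.'' This implication is false: a short exact sequence of abelian groups splits when the \emph{quotient} is free (projective), not when the kernel is. The sequence $0\to \mathbb{Z}\to \mathbb{Z}\to \mathbb{Z}/2\mathbb{Z}\to 0$, whose first map is multiplication by $2$, has free kernel, does not split, and exhibits exactly the failure you must exclude: torsion in the quotient that is not the image of any torsion element of the middle group. So, as written, you have proved only the injectivity half of the statement (a torsion class killed by $q_{f\ast}$ lies in $K$, which is torsion-free, hence vanishes); surjectivity of $q_{f\ast}$ from ${\rm Fi}(H_2(M;\mathbb{Z}))$ onto ${\rm Fi}(H_2(W_f;\mathbb{Z}))$ is unproved.

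The good news is that your own intersection data closes the gap. Define $\rho:H_2(M;\mathbb{Z})\to {\mathbb{Z}}^{c-1}$ by $\rho(x)=({\epsilon}_k(x\cdot [C_k]))_{k=1}^{c-1}$, where ${\epsilon}_k=[S_k]\cdot [C_k]=\pm 1$; then $\rho([S_j])$ is the $j$-th standard basis vector, so $\sigma\circ \rho$ is an idempotent endomorphism of $H_2(M;\mathbb{Z})$ with image $K$, where $\sigma$ sends the $j$-th basis vector to $[S_j]$. Hence $H_2(M;\mathbb{Z})=K\oplus \ker \rho$, the restriction of $q_{f\ast}$ to $\ker \rho$ is an isomorphism onto $H_2(W_f;\mathbb{Z})$, and since ${\rm Fi}(H_2(M;\mathbb{Z}))\subset \ker \rho$ (torsion maps to $0$ in ${\mathbb{Z}}^{c-1}$), the torsion statement follows. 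In other words, the splitting must come from the retraction furnished by the pairing, not from freeness of $K$. For comparison, the paper's proof avoids the exact sequence of $(W,M)$ entirely and argues geometrically: injectivity by representing a torsion class of $H_2(M;\mathbb{Z})$ by an embedded $2$-sphere isotoped off the singular set, and surjectivity by representing a torsion class of $H_2(W_f;\mathbb{Z})$ by an embedded $2$-sphere in ${\rm Int}\ W_f$, over which the circle bundle $q_f$ is trivial because its Euler class evaluates to zero on a torsion class, and then taking a section; your argument buys an explicit direct-sum decomposition of $H_2(M;\mathbb{Z})$ (close to Proposition \ref{prop:8}), at the cost of the duality bookkeeping for $(W,M)$.
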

\begin{proof}
	$M$ is simply-connected and $W_f$ is also simply-connected by Proposition \ref{prop:3}. We have the relation $(n-1)+2=n+1=m$. Each element of ${\rm Fi}(H_2(M;\mathbb{Z}))$ is represented by a smoothly embedded $2$-dimensional standard sphere. From these facts, a fact on the elements represented by connected components of the singular set of $f$ in Proposition \ref{prop:5} and with fundamental arguments on intersection theory, we can take the $2$-dimensional sphere in $M$ apart from the singular set of $f$. Take this element of ${\rm Fi}(H_2(M;\mathbb{Z}))$ as an element which is not the zero element. If it is mapped to the zero element by the homomorphism ${q_f}_{\ast}$, then the element must be the zero element from the fact that $W_f$ is simply-connected and an elementary argument on homotopy theory and bundles. This is a contradiction and ${q_f}_{\ast}$ maps ${\rm Fi}(H_2(M;\mathbb{Z}))$ into ${\rm Fi}(H_2(W_f;\mathbb{Z}))$ by a monomorphism. Each element of ${\rm Fi}(H_2(W_f;\mathbb{Z}))$ is represented by a smoothly embedded $2$-dimensional standard sphere in the interior ${\rm Int}\ W_f \subset W_f$ such that the bundle given by the restriction of $f$ over the preimage of the $2$-dimensional sphere is a trivial linear bundle whose fiber is the unit sphere $S^1$. This is due to well-known classifications of bundles over paracompact spaces whose fibers are circles by the 2nd cohomology groups of the base spaces where the coefficient ring is $\mathbb{Z}$. We can take a section of the bundle over the $2$-dimensional sphere to complete the proof of the fact that the homomorphism we consider is an epimorphism.
	
	This completes the proof.
\end{proof}

For systematic theory of linear bundles and more general bundles, see \cite{milnorstasheff,steenrod} for example.

\begin{Rem}
	If $n \geq 4$, then ${\rm Fi}(H_2(W_f;\mathbb{Z}))$ must be the trivial group in Proposition \ref{prop:6} due to the proof of Theorem \ref{thm:1} (\ref{thm:1.8}) in \cite{nishioka}.
\end{Rem}
By the proof of Proposition \ref{prop:5}, we have the following proposition.
\begin{Prop}
	\label{prop:7}
	In the situation of Proposition \ref{prop:5} and its proof, choose an arbitrary connected component $C_0$ of the singular set of $f$. For the family $\{C_j\}$ of all of the $l-1$ remaining connected components of the singular set of $f$, we have a family $\{L_j\}$ of $1$-dimensional compact and connected manifolds diffeomorphic to a closed interval like one in the proof of Proposition \ref{prop:5} enjoying the following three.
\begin{itemize}
	\item The boundary of $L_j$, consisting of exactly two points, are smoothly embedded in the boundary $\partial W_f$ and one point is embedded in $C_0$ and the other point is in $C_j$. The interior is embedded in the interior ${\rm Int}\ W_f \subset W_f$.  
		\item The restriction of $q_f$ to the preimage ${q_f}^{-1}(L_j)$ is regarded as the manifold of the domain of a Morse function on an {\rm (}$m-n+1${\rm )}-dimensional homotopy sphere.
		\item $L_{j_1}$ and $L_{j_2}$ are mutually disjoint for $j_1 \neq j_2$. 
	\end{itemize}
Then we have the following bases of suitable subgroups of $H_1(W_f,\partial W_f;\mathbb{Z})$ and $H_2(M;\mathbb{Z})$.
\begin{itemize}
	\item A basis consisting of $l-1$ elements of $H_1(W_f,\partial W_f;\mathbb{Z})$ each element of which is not divisible by any integer greater than $1$ and is represented by some manifold $L_j$ in the family.
	\item A basis consisting of $l-1$ elements of $H_2(M;\mathbb{Z})$ each element of which is not divisible by any integer greater than $1$ and is represented by the manifold ${q_f}^{-1}(L_j)$ for some manifold $L_j$ in the family.
	\end{itemize}
Furthermore, suppose also that $H_1(W_f;\mathbb{Z})$ or $H_1(M;\mathbb{Z})$ is the trivial group. Then by Proposition \ref{prop:3}, these two groups are isomorphic{\rm :} for example assume that $M$ is simply-connected as an explicit case. Under this additional constraint, take another similar family $\{{L_j}^{\prime}\}$ of $1$-dimensional compact and connected manifolds smoothly embedded in $W_f$ instead of $\{L_j\}$. Then we have the same bases as ones obtained in the original case of $\{L_j\}$.
\end{Prop}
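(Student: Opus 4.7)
The plan is to derive both bases from the same simple source, namely the fact that $\partial W_f$ has precisely $l$ connected components (the images under $q_f$ of $C_0, C_1, \dots, C_{l-1}$) and each arc $L_j$ connects $C_0$ to $C_j$ by construction. First I would analyze the connecting homomorphism $\partial\colon H_1(W_f,\partial W_f;\mathbb{Z}) \to H_0(\partial W_f;\mathbb{Z})$ in the long exact sequence of the pair. Since $H_0(\partial W_f;\mathbb{Z})$ is free abelian on the $l$ components, $\partial [L_j] = [C_j] - [C_0]$, and these $l-1$ differences are linearly independent and none of them is divisible by any integer greater than $1$. Pulling this back, the classes $[L_j]\in H_1(W_f,\partial W_f;\mathbb{Z})$ themselves must be linearly independent and individually indivisible (any divisibility relation would survive under $\partial$), yielding the first basis.

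For the second basis, I would pair the proposed generators $[q_f^{-1}(L_j)]$ against the classes $[C_i]$ of the singular-set components using the intersection form on $M$ (oriented thanks to the orientability assumption of Proposition 5). By Proposition 2, the bundle description of $f$ near $\partial W_f$ shows that $q_f^{-1}(L_j)$, viewed as an $(m{-}n{+}1)$-dimensional homotopy sphere, meets $C_i$ transversely in exactly one point when $i=j$, exactly one point (with a uniform sign) when $i=0$, and nowhere else, because $L_j$ itself meets $\partial W_f$ only at one point of each of $C_0$ and $C_j$. Hence the intersection matrix of $\{[q_f^{-1}(L_j)]\}_{j=1}^{l-1}$ against $\{[C_j]\}_{j=1}^{l-1}$ is unit-triangular, so the $[q_f^{-1}(L_j)]$ are linearly independent and each is indivisible. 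In the stated degree context $(m{-}n{+}1=2$, as in Proposition 6$)$, these classes live in $H_2(M;\mathbb{Z})$ and furnish the second basis.

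For the \textbf{furthermore} clause, if $H_1(W_f;\mathbb{Z})=0$ (equivalently $H_1(M;\mathbb{Z})=0$ by Proposition 3) then the long exact sequence of $(W_f,\partial W_f)$ makes $\partial$ injective on the subgroup generated by any two competing arc classes $[L_j]$ and $[L_j']$; since both have the same image $[C_j]-[C_0]$ they coincide in $H_1(W_f,\partial W_f;\mathbb{Z})$. The corresponding preimage classes $[q_f^{-1}(L_j)]$ and $[q_f^{-1}(L_j')]$ in $H_2(M;\mathbb{Z})$ therefore also coincide, because ${q_f}^{-1}$ applied to a null-relative cycle produces a null-relative cycle by the bundle structure in Proposition 2.

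The main obstacle I expect is the careful verification that the transverse intersection of $q_f^{-1}(L_j)$ with $C_i$ is exactly as claimed, including the assertion that the intersection numbers at the two endpoints have the correct signs so that no cancellation destroys the unit-triangular structure of the intersection matrix. This reduces to a local computation near each endpoint of $L_j$, using the collar linear-disk-bundle model of $f$ around $\partial W_f$ from Proposition 2 together with the Morse-theoretic structure of $q_f|_{q_f^{-1}(L_j)}$ asserted in Proposition 5.
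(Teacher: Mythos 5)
Your proposal is correct, and at its core it coincides with the paper's own (extremely terse) proof: the paper disposes of Proposition~\ref{prop:7} by pointing back to the proof of Proposition~\ref{prop:5}, i.e.\ the construction of the arcs $L_j$ plus ``Poincar\'e duality or intersection theory'' for $W_f$ and $M$. Your pairing of the classes $[{q_f}^{-1}(L_j)]$ against the singular-set components $[C_i]$, justified by the local normal form of Definition~\ref{def:1} at the two endpoint singular points, is exactly that intersection-theoretic argument made explicit; note that for $i,j\geq 1$ your matrix is in fact diagonal with entries $\pm 1$ (the endpoint on $C_0$ never contributes when $i\geq 1$), so the sign-cancellation issue you flag as the main obstacle does not even arise. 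Where you genuinely deviate is in handling the $H_1(W_f,\partial W_f;\mathbb{Z})$ basis and the ``furthermore'' clause via the long exact sequence of the pair $(W_f,\partial W_f)$: independence and indivisibility are read off from the primitive vectors $[C_j]-[C_0]$ in $H_0(\partial W_f;\mathbb{Z})\cong \mathbb{Z}^l$, and uniqueness of the classes $[L_j]$ follows from injectivity of $\partial$ once $H_1(W_f;\mathbb{Z})=0$. This is more elementary and more self-contained than the duality the paper gestures at, and it isolates cleanly why exactly $l-1$ (not $l$) classes appear.

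One step needs firming up. In the ``furthermore'' clause you pass from $[L_j]=[{L_j}^{\prime}]$ in $H_1(W_f,\partial W_f;\mathbb{Z})$ to $[{q_f}^{-1}(L_j)]=[{q_f}^{-1}({L_j}^{\prime})]$ in $H_2(M;\mathbb{Z})$ by asserting that ${q_f}^{-1}$ of a null relative cycle is null. As stated this does not typecheck: preimages of singular chains under $q_f$ are not defined, and over $\partial W_f$ the map $q_f$ is injective, so ``preimages'' of chains lying in the boundary do not have the dimension your argument needs. The standard repair is the umkehr formulation: $[{q_f}^{-1}(L)]$ equals the Poincar\'e dual in the closed oriented manifold $M$ of ${q_f}^{\ast}$ applied to the Lefschetz dual in the compact oriented manifold $W_f$ of $[L]\in H_1(W_f,\partial W_f;\mathbb{Z})$, and this composite depends only on the class $[L]$. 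With that substitution (and with the reading, which you correctly identified, that the paper's ``$H_2(M;\mathbb{Z})$'' means $H_{m-n+1}(M;\mathbb{Z})$, i.e.\ the case $m=n+1$ of Proposition~\ref{prop:6}), your argument is complete; I regard this as an imprecision to be repaired rather than a fatal gap, especially given that the paper itself offers no justification at all for this clause.
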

\begin{Prop}
	\label{prop:8}
	In the situation of Proposition \ref{prop:6}, assume that the number of the singular set of $f$ is $l${\rm :} note that $l>0$ holds since $M$ is compact and ${\mathbb{R}}^n$ is not compact.
	\begin{enumerate}
		\item
		\label{prop:8.1}
		 Assume also that ${\rm Fr}(H_2(W_f;\mathbb{Z}))$ is the trivial group. Then
	we have a family $\{L_j\}$ of $1$-dimensional compact and connected manifolds diffeomorphic to a closed interval
	like one in Propositions \ref{prop:5} and \ref{prop:7} and the proof and $H_2(M;\mathbb{Z})$ is generated by the disjoint union of the set of all elements represented by the $l-1$ $2$-dimensional spheres in the family $\{{q_f}^{-1}(L_j)\}$ like one in the proof of Propositions \ref{prop:5} and \ref{prop:7} and the finite group ${\rm Fi}(H_2(M;\mathbb{Z}))$. $H_2(M;\mathbb{Z})$ is also isomorphic to the direct sum of the free group of rank $l-1$ and ${\rm Fi}(H_2(W_f;\mathbb{Z}))$. 
	\item \label{prop:8.2}
	Assume also that $M$ is a manifold like $M_0$ in Proposition \ref{prop:2} {\rm (}\ref{prop:2.2}{\rm )} and that the map $f$ is a map like $f_0$ there. Then
	we have a family $\{L_j\}$ of $1$-dimensional compact and connected manifolds diffeomorphic to a closed interval
	like one in Propositions \ref{prop:5} and \ref{prop:7} and the proof and $H_2(M_0;\mathbb{Z})$ is generated by the disjoint union of the set of all elements represented by the $l-1$ $2$-dimensional spheres in the family $\{{q_f}^{-1}(L_j)\}$ like one in the proof of Propositions \ref{prop:5} and \ref{prop:7} and the group isomorphic to $H_2(W_{f_0};\mathbb{Z})$.
\end{enumerate}
\end{Prop}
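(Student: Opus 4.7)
The plan is to exploit the PL bordism $W$ from Proposition \ref{prop:4}: it has $\partial W = M$ and a $D^{m-n+1}$-bundle projection $r \colon W \to W_f$ that is a deformation retract, so $r_* \colon H_*(W;\mathbb{Z}) \to H_*(W_f;\mathbb{Z})$ is an isomorphism. Since $M$ is simply-connected it is orientable, and since $W$ deformation-retracts to the simply-connected $W_f$ (Proposition \ref{prop:3}), $W$ is also simply-connected and hence orientable. I would then read off $H_2(M;\mathbb{Z})$ from the long exact sequence of the pair $(W, M)$, using Lefschetz duality twice. With $m = n+1$, Lefschetz duality on $W$ gives $H_k(W, M;\mathbb{Z}) \cong H^{m+1-k}(W_f;\mathbb{Z})$, and Lefschetz duality on $W_f$ turns this into $H_{k-2}(W_f, \partial W_f;\mathbb{Z})$. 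In particular $H_2(W, M;\mathbb{Z}) \cong H_0(W_f, \partial W_f;\mathbb{Z}) = 0$ because $W_f$ is connected with nonempty boundary, while $H_3(W, M;\mathbb{Z}) \cong H_1(W_f, \partial W_f;\mathbb{Z})$.

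Next I would compute $H_1(W_f, \partial W_f;\mathbb{Z})$. The long exact sequence of $(W_f, \partial W_f)$, together with $H_1(W_f;\mathbb{Z}) = 0$ and the identification of the components of $\partial W_f$ with the $l$ components of the singular set of $f$, yields $H_1(W_f, \partial W_f;\mathbb{Z}) \cong \mathbb{Z}^{l-1}$ with basis $\{[L_j]\}$, consistent with Proposition \ref{prop:7}. The pair sequence then reduces to
\begin{equation*}
H_3(W;\mathbb{Z}) \to \mathbb{Z}^{l-1} \xrightarrow{\partial} H_2(M;\mathbb{Z}) \xrightarrow{q_{f*}} H_2(W_f;\mathbb{Z}) \to 0.
\end{equation*}
Geometrically, $r^{-1}(L_j)$ is a trivial $D^2$-bundle over the arc $L_j$, hence a $3$-disk whose boundary sphere is exactly $q_f^{-1}(L_j)$, so $\partial([L_j]) = [q_f^{-1}(L_j)]$. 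Proposition \ref{prop:7} then ensures that these $l-1$ classes are independent in $H_2(M;\mathbb{Z})$, so $\ker q_{f*}$ is precisely the rank-$(l-1)$ free subgroup they span.

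For part (\ref{prop:8.1}), the hypothesis $\mathrm{Fr}(H_2(W_f;\mathbb{Z})) = 0$ reduces $H_2(W_f;\mathbb{Z})$ to its torsion subgroup, so by Proposition \ref{prop:6} the restriction of $q_{f*}$ to $\mathrm{Fi}(H_2(M;\mathbb{Z}))$ is an isomorphism onto all of $H_2(W_f;\mathbb{Z})$; this subgroup thus complements $\ker q_{f*}$ and gives the desired direct sum decomposition. For part (\ref{prop:8.2}), the triviality of both the sphere and disk subbundles in Proposition \ref{prop:2}(\ref{prop:2.2}) furnishes a global section $\sigma \colon W_{f_0} \to M_0$ of $q_{f_0}$ (a constant point in the $S^1$-fiber, extended through the trivial $D^2$-subbundle on the collar), so $\sigma_*$ splits $q_{f_0 *}$ and $H_2(M_0;\mathbb{Z}) = \ker q_{f_0 *} \oplus \sigma_*(H_2(W_{f_0};\mathbb{Z}))$, which together with the long exact sequence computation yields the claimed generation. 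The main obstacle I anticipate is the geometric identification $\partial([L_j]) = [q_f^{-1}(L_j)]$ across the chain of dualities: one must verify using the PL extension of the smooth sphere bundle across the collar $N(\partial W_f)$ in Proposition \ref{prop:4}(\ref{prop:4.4}) that $r^{-1}(L_j)$ is genuinely a relative $3$-disk with the expected oriented boundary sphere in $M$, so that the connecting homomorphism of the pair sequence agrees with the geometric boundary map on these explicit cycles.
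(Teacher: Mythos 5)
Your proposal is correct and follows essentially the same route as the paper's own proof: both pass to the compact $(m+1)$-dimensional manifold $W$ of Proposition \ref{prop:4}, run the homology exact sequence of the pair $(W,M)$, use duality on $W$ and on $W_f$ to identify $H_3(W,M;\mathbb{Z})$ with $H_1(W_f,\partial W_f;\mathbb{Z})\cong \mathbb{Z}^{l-1}$, invoke Propositions \ref{prop:5} and \ref{prop:7} for the independence of the classes represented by the spheres ${q_f}^{-1}(L_j)$ and Proposition \ref{prop:6} for part (\ref{prop:8.1}), and use the splitting coming from the triviality of the bundles for part (\ref{prop:8.2}). Your write-up is merely more explicit on points the paper leaves implicit, namely the vanishing of $H_2(W,M;\mathbb{Z})$ and the geometric identification of the connecting homomorphism with $[L_j]\mapsto [{q_f}^{-1}(L_j)]$.
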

\begin{proof}
We abuse the notation in Proposition \ref{prop:4}. For example, we need an ($m+1$)-dimensional compact and simply-connected manifold $W$, collapsing to and homotopy equivalent to a compact and simply-connected manifold $W_f$. We have a homology exact sequence for $(W,\partial W=M)$ as

$$ H_3(M;\mathbb{Z}) \rightarrow H_3(W;\mathbb{Z}) \rightarrow H_3(W,M;\mathbb{Z}) \rightarrow H_2(M;\mathbb{Z}) \rightarrow H_2(W;\mathbb{Z}) \rightarrow $$

\noindent and $H_3(W,M;\mathbb{Z})$ is isomorphic to $H^4(W;\mathbb{Z})$ and $H^4(W_f;\mathbb{Z})$ and by virtue of Poincar\'e duality theorem for the simply-connected manifold $W_f$, they are isomorphic to $H_1(W_f,\partial W_f,\mathbb{Z})$. 

We prove (\ref{prop:8.1}).
The ranks of $H_2(M;\mathbb{Z})$ and ${\rm Fr}(H_2(M;\mathbb{Z}))$ are $l-1$ by the homology exact sequence and Propositions \ref{prop:5} and \ref{prop:7}. Proposition \ref{prop:6} completes the proof.

We prove (\ref{prop:8.2}). By the triviality of the smooth bundles over the interior of $W_{f}=W_{f_0}$ and several facts in Proposition \ref{prop:4}, the homomorphism from $H_j(M;\mathbb{Z})$ into $H_j(W;\mathbb{Z})$ is surjective for any $j$. By respecting these facts, we have a homomorphism from $H_j(W;\mathbb{Z})$ into $H_j(M;\mathbb{Z})$ whose composition with the given homomorphism from $H_j(M;\mathbb{Z})$ into $H_j(W;\mathbb{Z})$ is the identity morphism.

This completes the proof.
\end{proof}
\subsection{Proofs of Main Theorems and additional expositions.}
\begin{proof}[A proof of Main Theorems \ref{mthm:1} and \ref{mthm:2}]
	Theorem \ref{thm:2} proves the case $n=1,2,3,4$.
	
	Suppose that there exists a special generic map $f:M \rightarrow {\mathbb{R}}^5$.
	
	We abuse the notation in Proposition \ref{prop:2} as before.
By Proposition \ref{prop:3}, $W_f$ is also simply-connected and the homomorphism ${q_f}_{\ast}:H_2(M;A) \rightarrow H_2(W_f;A)$ is an isomorphism. The Poincar\'e duals to ${q_f}^{\ast}(u_{0,1})$ and ${q_f}^{\ast}(u_{0,2})$ are elements of $H_3(W_f,\partial W_f;A)$ under a suitable orientation of $W_f$. 

We have elements of $H_5(W,\partial W;A)$ canonically by respecting the fiber of the bundle given by the map $r$, considering a kind of {\it prism-operators}, presented in \cite{hatcher} for example, or a kind of so-called {\it Thom-classes}, presented in \cite{milnorstasheff}, for example. By considering the boundary $\partial W=M \subset W$, we canonically have the elements of $H_4(M;A)$. See \cite{saeki} and see also \cite{kitazawa7} for related arguments. 

We can orient $M$, $W$ and $W_f$. The manifold $M$ is $m$-dimensional and the manifold $W_f$ is $5$-dimensional. We also have the relation $3+3-5=1$. By virtue of these expositions together with Poincar\'e duality or intersection theory for these manifolds, under suitable orientations of the manifolds, the Poincar\'e dual to the cup product $u_{0,1} \cup u_{0,2} \in H^4(M;A)$ is the sum of elements represented by the preimages of smoothly embedded circles in ${\rm Int}\ W_f$ or $1$-dimensional compact and connected manifolds in $\{L_j\}$ in Propositions \ref{prop:5} and \ref{prop:7}. Elements represented by the preimages of smoothly embedded circles in ${\rm Int}\ W_f$ are always the zero elements of $H_2(M;A)$. 
The order of an element of $H_2(M;\mathbb{Z})$ represented by
the preimage ${q_f}^{-1}(L_j)$ of some $L_j$ in the proof of Proposition \ref{prop:5} and Proposition \ref{prop:7} is always infinite.

This contradicts the assumption that we cannot find any element $u_0 \in H^{4}(M;\mathbb{Z})$ enjoying both the following properties.

\begin{itemize}
	\item The order of $u_0$ is infinite.
\item From $u_0$, by changing the coefficient ring via the natural quotient map from $\mathbb{Z}$ onto $A$ canonically, we have $u_{0,1} \cup u_{0,2} \in H^4(M;A)$.
\end{itemize}

This means that $M$ does not admit special generic maps into ${\mathbb{R}}^5$. This completes the proof of Main Theorem \ref{mthm:1} except the fact that the constraint $m \geq 7$ is optimal. 

We prepare a manifold diffeomorphic to $D_1:=S^2 \times S^2 \times D^1=S^2 \times S^2 \times [-1,1]$. 
Consider a homology class $e \in H_2(S^2 \times S^2 \times D^1;\mathbb{Z})$, represented by $S^2 \times \{\ast\} \times \{0\}$.

We can choose a $2$-dimensional sphere ${S^2}_0$ and smoothly embedded in the interior ${\rm Int}\ (S^2 \times S^2 \times D^1) \subset S^2 \times S^2 \times D^1$ in such a way that $2e$ is represented by it.
We can take a small closed tubular neighborhood $D_2$ of the sphere ${S^2}_0$.

By applying theory of so-called {\it handles} and {\it surgeries} we have a new $5$-dimensional closed and simply-connected manifold $D_{1,2}$ whose boundary consists of exactly two connected components. Furthermore, we can do so that the following properties are enjoyed.
\begin{itemize}
	\item The group $H_2(D_{1,2};\mathbb{Z})$ is isomorphic to the direct sum of $\mathbb{Z}$, $\mathbb{Z}$, and the commutative group of order $2$ or $\mathbb{Z}/2\mathbb{Z}$, $H_3(D_{1,2};\mathbb{Z})$ is the trivial group and $H_4(D_{1,2};\mathbb{Z})$ is isomorphic to $\mathbb{Z}$.
	\item The tangent bundle of $D_{1,2}$ is trivial and as a result $D_{1,2}$ can be smoothly immersed into ${\mathbb{R}}^5$.
	\end{itemize} 

More precisely, by removing the interior of $D_2$ from $D_1$ and attaching a copy of the manifold $D^3 \times S^2$ along the boundaries in a suitable way instead to have the manifold $D_{1,2}$. 

We can calculate the homology groups by using Mayer-Vietoris exact sequences for pairs of two manifolds 
$(D_1-{\rm Int}\ D_2,D_2)$ and $(D_1-{\rm Int}\ D_2,D_{1,2}-{\rm Int}\ (D_1-{\rm Int} D_2))$
 for example. To obtain $D_1$ and $D_{1,2}$ we glue the boundaries by (suitable) diffeomorphisms for these two pairs.
For some expositions on {\it handles} and {\it surgeries} see \cite{milnor3} for example.

 We apply Proposition \ref{prop:2} (\ref{prop:2}) to have a special generic map $f_0:M_0 \rightarrow {\mathbb{R}}^5$ on a suitable $6$-dimensional closed and simply-connected manifold $M_0$ such that $W_{f_0}$ in Proposition \ref{prop:2} (\ref{prop:2.2}) is diffeomorphic to $D_{1,2}$. We apply Proposition \ref{prop:3} to see that $M_0$ is simply-connected, which we know easily of course.

Proposition \ref{prop:8} completely determines the group $H_j(M_0;\mathbb{Z})$ for an arbitrary integer $j$. 
The rank of ${\rm Fr}(H_2(M_0;\mathbb{Z}))$ is $3$ and $H_2(M_0;\mathbb{Z})$ is isomorphic to the direct sum of $\mathbb{Z}$, $\mathbb{Z}$, $\mathbb{Z}$ and $\mathbb{Z}/2\mathbb{Z}$. 

 By Poincar\'e duality theorem for $M_0$, $H^4(M_0;\mathbb{Z})$ is isomorphic to $H_2(M_0;\mathbb{Z})$ and is also isomorphic to the direct sum of $\mathbb{Z}$, $\mathbb{Z}$, $\mathbb{Z}$ and $\mathbb{Z}/2\mathbb{Z}$.

We can also see that $H_2(M_0;\mathbb{Z}/2\mathbb{Z})$ and $H^2(M_0;\mathbb{Z}/2\mathbb{Z})$ are free modules over $\mathbb{Z}/2\mathbb{Z}$ of rank $4$ and so is $H^4(M_0;\mathbb{Z}/2\mathbb{Z})$.
We can consider the basis of $H_2(M_0;\mathbb{Z}/2\mathbb{Z})$, consisting of
exactly four elements in the following.
\begin{itemize}
	\item An element $e_1$ represented by the image of a section of the trivial smooth bundle over $S^2 \times \{{\ast}\} \times \{0\} \subset S^2 \times S^2 \times D^1 \subset D_{1,2}$ given by the restriction of $f$ to the preimage.
		\item An element $e_2$ represented by the image of a section of the trivial smooth bundle over $\{{\ast}\} \times S^2 \times \{\frac{1}{2}\} \subset S^2 \times S^2 \times D^1 \subset D_{1,2}$ given by the restriction of $f$ to the preimage.
	\item An element $e_3$ represented by the boundary of a fiber of the trivial bundle $D_2$ over ${S^2}_0$. This fiber is in the boundary of $D_1-{\rm Int}\ D_2 \subset D_{1,2}$. The boundary of $D_1-{\rm Int}\ D_2$ is in the inteiror of $D_{1,2}$.
	\item An element $e_4$ represented by ${q_f}^{-1}(L_0)$.
\end{itemize}	
	 We have the cohomology dual ${e_i}^{\ast}$ to $e_i$ for $i=1,2,3,4$ for the basis. Note that the first element $e_1$ can not be obtained from any element whose order is infinite where the coefficient ring is $\mathbb{Z}$ by changing the coefficient ring using the canonical quotient map from $\mathbb{Z}$ onto $\mathbb{Z}/2\mathbb{Z}$.
	 The remaining three elements are obtained in this way. By respecting Poincar\'e duality for $M_0$ and $D_{1,2}$, we have that the cup product ${e_2}^{\ast} \cup {e_4}^{\ast}$ is regarded as the Poincar\'e dual to $e_1$ under suitable orientations of the manifolds.
	 For Poincar\'e duals, we have the following facts.
	 \begin{itemize}
	 	\item For $i=1,2$, the Poincar\'e dual to ${e_i}^{\ast}$ is represented by the preimage $f^{-1}(T_i)$ of
	 	a suitable $3$-dimensional compact, connected and orientable manifold $T_i$ diffeomorphic to $S^2 \times D^1$ and embedded smoothly in $D_{1,2}$. Here the boundary of $T_i$ is regarded to be embedded into the boundary, two distinct connected components of the boundaries are regareded to be embedded in distinct connected components of the boundary, and the inteiror of $T_i$ is regarded to be embedded into the inteiror. We can regard that the preimage $f^{-1}(T_i)$ is the domain of a special generic map into ${\mathbb{R}}^3$ and a $4$-dimensional closed, connected and orientable manifold. 
	 	\item The Poincar\'e dual to ${e_3}^{\ast}$ is represented by the preimage $f^{-1}(T_3)$ of
	 	a suitable $3$-dimensional compact, connected and orientable submanifold $T_3$ diffeomorphic to the $3$-dimensional unit disk $D^3$ and embedded smoothly in $D_{1,2}$. Here the boundary of $T_3$ is embedded into the boundary and the inteiror of $T_3$ is embedded into the inteiror. Furthermore, $T_3 \bigcap (D_1-{\rm Int}\ D_2) \subset D_{1,2}$ is regarded as a manifold diffeomorphic to $S^2 \times D^1$. We can regard that the preimage $f^{-1}(T_3)$ is the domain of a special generic map into ${\mathbb{R}}^3$ and a $4$-dimensional closed, connected and orientable manifold and a standard sphere by Theorem \ref{thm:1} (\ref{thm:1.3},\ref{thm:1.4}). 
	 	\item The Poincar\'e dual to ${e_4}^{\ast}$ is represented by the image of a section over $S^2 \times S^2 \times \{0\} \subset S^2 \times S^2 \times D^1$ given by the restriction of $f$ to the preimage.  
	 
	 \end{itemize}	 

We have shown that the condition $m \geq 7$ is optimal. This completes the proof of Main Theorem \ref{mthm:1}.

We can show Main Theorem \ref{mthm:2} in a similar way to the proof of Main Theorem \ref{mthm:1} due to Proposition \ref{prop:6}.
\end{proof}
\begin{Ex}
For example, the product of $S^2$ and a $5$-dimensional closed and simply-connected manifold $M^{\prime}$ such that $H_2(M^{\prime};\mathbb{Z})$ is not free admits no special generic maps into ${\mathbb{R}}^n$ for $n=1,2,3,4,5$ by Main Theorem \ref{mthm:1} with Theorem \ref{thm:2}. In addition, for example, a manifold represented as connected sum of this manifold and any $7$-dimensional closed and simply-connected manifold admits no special generic maps into ${\mathbb{R}}^n$ for $n=1,2,3,4,5$ by a similar reason.
	
In Remark 3 of \cite{kitazawa2}, this is presented as an open problem.
\end{Ex}
\begin{Ex}
We can have various $6$-dimensional closed and simply-connected manifolds for Main Theorem \ref{mthm:2} by applying classification theory in \cite{jupp,wall,zhubr,zhubr2}.
\end{Ex}

\begin{proof}[A proof of Main Theorems \ref{mthm:3} and \ref{mthm:4}]
	Assume the existence of a special generic map $f:M \rightarrow {\mathbb{R}}^n$. 
	By Proposition \ref{prop:3}, $W_f$ is simply-connected.
	The proof of Main Theorems \ref{mthm:1} shows that for the map, we need to have a family $\{L_j\}$ of $1$-dimensional compact and connected manifolds diffeomorphic to a closed interval consisting of at least $l$ elements in Proposition \ref{prop:7}.
	 This completes the proof of Main Theorem \ref{mthm:3}.
   
   Prepare a special generic map in Example \ref{ex:1} (\ref{ex:1.2}) on $S^2 \times S^{m-4}$ into ${\mathbb{R}}^3$ and the product map of this and the identity map on $S^2$. We can smoothly embed the manifold of the target into ${\mathbb{R}}^5$ to have a special generic map from
    $S^2 \times S^2 \times S^{m-4}$ into ${\mathbb{R}}^5$. The singular set of the resulting map consists of exactly two connected components. 
    
    The number of connected components of the singular sets of special generic maps into ${\mathbb{R}}^5$ there must be at least $2$ by Theorem \ref{thm:5} or Main Theorem \ref{mthm:3}.

    Prepare $l_0>1$ copies of this map and applying a natural construction for a manifold represented as a connected sum of manifolds admitting special generic maps into a fixed Euclidean space, we have a special generic map on a manifold represented as a connected sum of $l_0$ copies of this manifold into ${\mathbb{R}}^5$ whose singular set consists of exactly $l_0+1$ connected components. For the construction, see \cite{saeki} for example. The numbers of connected components of the singular sets of special generic maps on the manifold  into ${\mathbb{R}}^5$ must be at least $l_0+1$ by virtue of Main Theorem \ref{mthm:3}.
    
    This complete the proof of Main Theorem \ref{mthm:4}. 
\end{proof}

\section{Acknowledgement.}
The author is a member of the project JSPS KAKENHI Grant Number JP17H06128 "Innovative research of geometric topology and singularities of differentiable mappings"
(Principal investigator: Osamu Saeki). The author is supported by this to produce the present study. 

We declare that we have no associated data essentially.

\end{document}